 \newcommand{\GI}{\operatorname{G}}
\newcommand{\II}{\operatorname{I}}
\newcommand{\OI}{\operatorname{O}}
\DeclareMathOperator{\Ker}{Ker}
\DeclareMathOperator{\img}{Im}
\DeclareMathOperator{\codim}{codim}
\DeclareMathOperator{\Ll}{span}
\newcommand{\agi}{algebraic generalized inverse}
\newcommand{\oi}{outer inverse}
\newcommand{\ii}{inner inverse}
\newcommand{\id}{1}
\newcommand{\C}{\mathscr{C}}
\newcommand{\B}{\mathscr{B}}
\newcommand{\R}{\mathbb{R}}
\newcommand{\Q}{\mathbb{Q}}
\newcommand{\es}{E}
\newcommand{\Bo}{\B^{\perp}}
\newcommand{\Boi}[1]{\B_{#1}^{\perp}}
\newcommand{\PG}{\mathbb{P}}
\newcommand{\E}{\operatorname{E}}
\newcommand{\D}{\operatorname{D}}
\newcommand{\pack}{\texttt{IntDiffOp}}
\newcommand{\maple}{\textsc{Maple}}
\newcommand{\dx}{\,\mathrm{d}}
\newcommand{\bvp}[2]{\boxed{\begin{array}{l}#1\\#2\end{array}}}
\newcommand{\scal}[2]{\langle #1 , #2 \rangle}
\newtheorem{theorem}{Theorem}
\newtheorem{proposition}[theorem]{Proposition}
\newtheorem{lemma}[theorem]{Lemma}
\newtheorem{corollary}[theorem]{Corollary}
\newtheorem{definition}[theorem]{Definition}
\newtheorem{remark}[theorem]{Remark}
\begin{document}

\title[On the product of projectors and generalized inverses]{On the product of \\ projectors and generalized inverses}

\author{Anja Korporal}
\address{Johann Radon Institute for Computational and Applied Mathematics (RICAM) \\ Austrian Academy of Sciences \\ A-4040 Linz, Austria}
\curraddr{}
\email{anja.korporal@ricam.oeaw.ac.at}
\thanks{}

\author{Georg Regensburger}
\address{Johann Radon Institute for Computational and Applied Mathematics (RICAM) \\ Austrian Academy of Sciences \\ A-4040 Linz, Austria}
\curraddr{}
\email{georg.regensburger@ricam.oeaw.ac.at}
\thanks{}

\subjclass[2000]{Primary 15A09; Secondary 47A05}

\keywords{Generalized inverse, projector, reverse order law, Fredholm operator, linear boundary problem, duality}

\date{\today}

\maketitle

\begin{abstract}

We consider generalized inverses 
of linear operators 
on arbitrary vector spaces and study the question when 
their product in reverse order is again a generalized inverse. 
This problem is equivalent to the question when the product of two projectors is again a
projector, and we discuss necessary and sufficient conditions in terms of their kernels and images alone. 
We give a new representation of the product of generalized inverses
that does not require explicit knowledge of the factors. 
Our approach is based on implicit representations of subspaces via their
orthogonals in the dual space.
For Fredholm operators, the corresponding computations
reduce to finite-dimensional problems.
 We illustrate our results with examples for matrices and linear ordinary boundary problems.
\end{abstract}

\section{Introduction}
Analogues of the reverse order law  $(AB)^{-1} = B^{-1}A^{-1}$ for bijective operators 
have been studied intensively for various kinds of generalized inverses. 
Most articles and books are concerned with the matrix case;
see for example 
\cite{Greville1966, Erdelyi1966, RaoMitra1971, ShinozakiSibuya1974, Werner1994, DePierroWei1998, Ben-IsraelGreville2003, Mitra2010, TakaneTianYanai2007, TianCheng2004, LiuWei2008}.
For infinite-dimensional vector spaces, usually additional topological structures like Banach or Hilbert spaces 
are assumed; see for example \cite{Nashed1987, DjordjevicRakovevic2008, DjordjevicDincic2010, DincicDjordjevic2013}. In our approach, we systematically exploit duality results that hold in arbitrary vector spaces
and a corresponding duality principle for statements about generalized inverses and projectors; see
Appendix~(\ref{sec:Duality}).

The validity of the reverse order law can be reduced to the question whether the product of
two projectors is a projector (Section \ref{sec:GI}).
This problem is studied  in \cite{GrossTrenkler1998, TakaneYanai1999, Werner1992} for finite-dimensional vector spaces.
We discuss necessary and sufficient conditions that carry over to arbitrary vector spaces and 
can be expressed in terms of the kernels and images of the respective operators alone (Section \ref{sec:Projectors}).
Applying the duality principle leads to new conditions and a characterization of the 
commutativity of two projectors that generalizes a result from \cite{BaksalaryBaksalary2002}.

In Section~\ref{sec:ROL}, we translate the results for projectors to generalized inverses and obtain necessary and sufficient conditions for the reverse order law in arbitrary vector spaces. Based on these conditions, we give a short proof for the characterization in Theorem~\ref{thm:allinner} of two operators such that the reverse holds for all inner inverses (also called g-inverses or $\{1\}$-inverses). Moreover, we show
that there always exist \agi s (also called $\{1,2\}$-inverses) of two operators $A$ and $B$ such that 
their product in reverse order is an \agi\ of $AB$.

Assuming the reverse order law to hold, Theorem \ref{thm:RevOrderLaw} gives a representation of 
the product of two outer inverses ($\{2\}$-inverses) that can be computed 
using only kernel and image of the outer inverses of the factors. In this representation, we rely on a description of the kernel of a composition using inner inverses (Section \ref{sec:Compositions}) and
implicit representations of subspaces via their orthogonals in the dual space.
Moreover, we avoid 
 the computation of generalized inverses by using the associated transpose map. Examples for matrices illustrating the results are given in Section~\ref{sec:Example}.

An important application for our results is given by linear boundary problems (Section~\ref{sec:BP}).
Their solution operators (Green's operators) are generalized inverses, and it is natural to express infinite
dimensional solution spaces implicitly via the (homogeneous) boundary conditions they satisfy.
 Green's operators for ordinary boundary problems are Fredholm operators, for which we can check the conditions for 
the reverse order law algorithmically and compute the implicit representation of the product (Section \ref{sec:Fredholm}). 
Hence we can test if the product of two (generalized) Green's operators is again a Green's operator, and we can determine 
which boundary problem it solves.

\section{Generalized inverses} \label{sec:GI}

 In this section, we first recall basic properties of generalized inverses. 
For further details and proofs, we refer to \cite{NashedVotruba1976, Nashed1987}
and the references therein.
Throughout this article, $U$, $V$, and $W$ always denote vector spaces over the same field $F$, and we use the notation $V_1 \leq V$ for a subspace $V_1$ of $V$.
\begin{definition}\label{dfn:GI}
   Let $T\colon V \to W$ be linear. We call a linear map $G \colon W \to V$ an \emph{inner inverse} of $T$ if $TGT = T$
   and an \emph{outer inverse} of $T$ if $GTG = G$. If $G$ is an inner and an outer inverse of $T$, we call $G$
   an \emph{\agi}\ of $T$.
\end{definition}
This terminology of generalized inverses is adopted from \cite{NashedVotruba1976}; 
other sources refer to inner inverses as generalized inverses or 
g-inverses, whereas \agi s are also called reflexive generalized inverses. Also the notations $\{1\}$-inverse
(resp. $\{2\}$- and $\{1,2\}$-inverse) are used, which refer to the corresponding Moore-Penrose equations the generalized inverse satisfies.

\begin{proposition} 
\label{prop:OIChar}
 Let $T \colon V \to W$ and $G \colon W \to V$ be linear. The following statements are equivalent:
  \begin{enumerate}[label=(\roman*)]
  \item $G$ is an outer inverse of $T$. 
  \item $GT$ is a projector and $\img GT = \img G$. \label{oi:Nashed2}
  \item $GT$ is a projector and $V = \img G \oplus \Ker GT$. \label{oi:Nashed3}
  \item $GT$ is a projector and $W = \img T + \Ker G$. \label{oi:Nashed4}
  \item $TG$ is a projector and $\Ker TG = \Ker G$. \label{oi:Nashed5} 
  \item $TG$ is a projector and $W = \Ker G \oplus \img TG$. \label{oi:Nashed6} 
  \item $TG$ is a projector and $\img G \cap \Ker T = \{0\}$. \label{oi:Nashed7}
 \end{enumerate}
\end{proposition}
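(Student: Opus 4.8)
The plan is to establish a cycle of implications rather than proving each equivalence against (i) separately, since many of the seven conditions are "dual" to one another and it is wasteful to repeat essentially the same argument. First I would record the elementary fact used throughout: a linear map $P$ is a projector (i.e. $P^2 = P$) if and only if $P$ restricts to the identity on its image, equivalently if and only if the domain decomposes as $\img P \oplus \Ker P$. I would also note the basic identity $GTG = G \iff (GT)(GT) = GT$ holds on $\img G$, together with $GT|_{\img G} = \id_{\img G}$; more precisely, $GTG = G$ says exactly that $GT$ acts as the identity on $\img G$, which already links condition (i) to the "$GT$ is a projector" statements.

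Concretely I would first prove (i) $\Leftrightarrow$ \ref{oi:Nashed2}. If $GTG = G$ then for $v \in \img G$, say $v = Gw$, we get $GTv = GTGw = Gw = v$, so $GT$ fixes $\img G$ pointwise; in particular $\img GT \supseteq \img G$, and the reverse inclusion is trivial, giving $\img GT = \img G$ and $(GT)^2 = GT$ on this image hence everywhere. Conversely, if $GT$ is a projector with $\img GT = \img G$, then $GT$ fixes $\img GT = \img G$ pointwise, and applying this to $v = Gw \in \img G$ gives $GTGw = Gw$ for all $w$, i.e.\ $GTG = G$. Next, \ref{oi:Nashed2} $\Leftrightarrow$ \ref{oi:Nashed3} and \ref{oi:Nashed2} $\Leftrightarrow$ \ref{oi:Nashed4} are pure linear algebra about the projector $P = GT$: for any projector $P$ on $V$ we have $V = \img P \oplus \Ker P$, so $\img GT = \img G$ is equivalent to $V = \img G \oplus \Ker GT$ (using $\img G \supseteq \img GT$ always, so equality of images is forced once the sum is direct and complements $\Ker GT$); and $W = \img T + \Ker G$ is handled by chasing: $GT$ is a projector with image $\img G$ iff $G$ restricted to $\img T$ is "onto $\img G$ up to $\Ker G$", which unwinds to $W = \img T + \Ker G$. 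Then I would prove the "dual" block \ref{oi:Nashed5}, \ref{oi:Nashed6}, \ref{oi:Nashed7} equivalent to (i) by a symmetric argument: $GTG = G$ says $TG$ fixes... no — rather, $GTG = G$ implies $(TG)^2 = T(GTG) = TG$, and $\Ker TG \supseteq \Ker G$ always while $v \in \Ker TG \Rightarrow Gv = GTGv = 0$ gives $\Ker TG = \Ker G$; conversely from $TG$ a projector with $\Ker TG = \Ker G$ one recovers $GTG = G$ because $G(TGw - w) $ lies in... I would verify $TGw - w \in \Ker G$? No: instead note $G(TGv - v)$ — the clean route is $TG(Gw)$ versus... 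I will simply show $TGv = v$ fails in general but $v - TGv$ — the correct statement is that for the projector $TG$, $W = \img TG \oplus \Ker TG = \img TG \oplus \Ker G$, and then $G = G(TG) = (GT)G$ following from $G$ vanishing on $\Ker TG = \Ker G$ and being the identity composed appropriately; the two remaining reformulations \ref{oi:Nashed6} and \ref{oi:Nashed7} are again the decomposition lemma for the projector $TG$ plus the observation $\img G \cap \Ker T = \{0\} \iff \Ker TG = \Ker G$ (since $Gv \in \Ker T$ with $Gv$ also in $\img G$ means $Gv = 0$).

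The main obstacle I anticipate is not any single implication but keeping the bookkeeping of the two "mirror" halves straight: the conditions \ref{oi:Nashed2}--\ref{oi:Nashed4} live on the domain side via the projector $GT$, while \ref{oi:Nashed5}--\ref{oi:Nashed7} live on the codomain side via the projector $TG$, and the symmetry is not perfect because $T$ and $G$ are not interchangeable. The cleanest organization is therefore: prove (i) $\Leftrightarrow$ \ref{oi:Nashed2} carefully, derive \ref{oi:Nashed3} and \ref{oi:Nashed4} from \ref{oi:Nashed2} by a standalone lemma on projectors, then prove (i) $\Leftrightarrow$ \ref{oi:Nashed5} directly (this is where one must be careful that $GTG=G \Rightarrow TGTG = TG$ and conversely $\Ker TG = \Ker G$ lets one cancel), and finally derive \ref{oi:Nashed6}, \ref{oi:Nashed7} from \ref{oi:Nashed5}. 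Each individual step is a short kernel/image chase; the value of the proof is the arrangement, not the computations.
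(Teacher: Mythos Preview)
Your plan is sound and each implication you sketch can be made rigorous; in particular (i)~$\Leftrightarrow$~\ref{oi:Nashed2} and (i)~$\Leftrightarrow$~\ref{oi:Nashed5} are handled correctly, and the remaining equivalences do reduce to the decomposition $V=\img P\oplus\Ker P$ for a projector together with short kernel/image chases. One small wobble: in the converse (v)~$\Rightarrow$~(i) you briefly dismiss the right idea---$TGw-w\in\Ker TG=\Ker G$ gives $GTGw=Gw$ immediately---before circling back to it; that is the cleanest route and you should keep it.

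As for comparison with the paper: the paper does not actually prove this proposition. It is stated as a known result with a reference to Nashed--Votruba and Nashed for the proof. The one relevant remark the paper does make (at the end of the appendix on duality) is that statements \ref{oi:Nashed5}--\ref{oi:Nashed7} are the duals of \ref{oi:Nashed2}--\ref{oi:Nashed4} under the duality principle~\ref{DualPrin}: passing to transposes swaps $T\leftrightarrow G^*$, kernels with images, and so on. You explicitly notice this mirror structure but choose to prove both halves by hand. Invoking duality instead would let you prove only (i)~$\Leftrightarrow$~\ref{oi:Nashed2}~$\Leftrightarrow$~\ref{oi:Nashed3}~$\Leftrightarrow$~\ref{oi:Nashed4} and get the other three for free, which is exactly the economy you say you are looking for when you complain about ``keeping the bookkeeping of the two mirror halves straight.'' Your direct approach is of course more self-contained and avoids the machinery of the appendix; the duality route is shorter once that machinery is in place.
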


Corresponding to \ref{oi:Nashed7} and \ref{oi:Nashed6}, for subspaces $B \leq V$ and $E \leq W$ with
 \begin{equation*} 
 B \cap \Ker T = \{0\} \quad \text{and} \quad 
 W = E \oplus T(B),
\end{equation*}
we can construct an \oi\ $ G$ of $T$ with 
 $\img G = B$ and $\Ker G = E$
 as follows; cf.\ \cite[Cor.\ 8.2]{Nashed1987}.
We consider the projector $Q$ with
\begin{equation} \label{eq:DefQOI}
 \img Q  = T(B), \;\; \Ker Q = E.
\end{equation}
The restriction $T|_B\colon B \to  T(B)$ is bijective since $B \cap \Ker T = \{0\}$, and we can define
 $ G = (T|_B)^{-1}Q$. One easily verifies that $G$ is an \oi\ 
of $T$ with $\img G = B$ and $\Ker G = E$. 
Since by \ref{oi:Nashed3} 
we have $V = B \oplus T^{-1}(E)$,  
we define the projector $P$ in analogy to $Q$ 
by
\begin{equation} \label{eq:DefPOI}
  \img P = T^{-1}(E), \;\; \Ker P = B. 
\end{equation}  
Then, by definition and by
Proposition \ref{prop:OIChar}, we have
\begin{equation*}\label{eq:Identities}
 GTG = G, \quad
 TG = Q, \quad \text{and} \quad
 GT = \id - P, 
\end{equation*}
and $G$ is determined uniquely by these equations. Hence an \oi\ depends only on the choice of the defining spaces 
 $B$ and $E$. We use the notations
 $G = \OI(T, B, E)$ and 
 $G = \OI(T, P, Q)$ 
for $P$ and $Q$ as in \eqref{eq:DefPOI} and \eqref{eq:DefQOI}.

Obviously, $G$ is an outer inverse of $T$ if and only if $T$ is an inner inverse of $G$. Therefore, we get a
result analogous to Proposition \ref{prop:OIChar} for inner inverses by interchanging the role of $T$ and $G$. The construction of \ii s is not completely analogous to outer inverses, see \cite[Prop.\ 1.3]{NashedVotruba1976}. 
For subspaces $B \leq V$ and $E \leq W$ 
such that  
\begin{equation} \label{eq:DirSumDec}
 V= \Ker T \oplus B\quad \text{and} \quad W = \img T  \oplus E,
\end{equation}
an inner inverse $G$ of $T$ is given on  $\img T$ by $(T|_B)^{-1}$ and can be chosen arbitrarily on $E$.
For such an inner inverse with  $B=\img GT$ and $E= \Ker TG$, we write $G \in \II(T, B, E)$.

For constructing \agi s,
we start with direct sums as in \eqref{eq:DirSumDec},
but require $\Ker G = E$ and $\img G = B$. We use the notation $G = \GI(T, B, E)$.

The following result for inner inverses 
is well-known  in the matrix case \cite{Searle1971,ShinozakiSibuya1974,Werner1992} and its elementary proof remains valid  for arbitrary vector spaces. 

\begin{proposition} \label{prop:CompIINS}
 Let $T_1 \colon V \to W$ and $T_2 \colon U \to V$ be linear with outer (resp. inner) inverses $G_1$ and $G_2$. 
 Let $P=G_1T_1$ and $Q=T_2G_2$. Then
 $G_2G_1$ is an outer (resp. inner) inverse of $T_1T_2$ if and only if $QP$ (resp. $PQ$) is a projector.
\end{proposition}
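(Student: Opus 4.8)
The plan is to use Proposition~\ref{prop:OIChar} applied to the composed map $T_1T_2$ and its candidate generalized inverse $G_2G_1$. First consider the outer-inverse case. Since $G_1$ is an outer inverse of $T_1$, the product $P = G_1T_1$ is a projector; similarly $Q = T_2G_2$ is a projector. The natural criterion to invoke is~\ref{oi:Nashed2}: $G_2G_1$ is an outer inverse of $T_1T_2$ if and only if $(G_2G_1)(T_1T_2)$ is a projector and $\img(G_2G_1)(T_1T_2) = \img G_2G_1$. Now $(G_2G_1)(T_1T_2) = G_2(G_1T_1)T_2 = G_2PT_2$, which I would like to relate to $QP = T_2G_2G_1T_1$. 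Here I expect to use the fact that multiplying $T_2$ on the left and $G_2$ on the right is, up to the projectors involved, an ``isomorphism'' between the relevant subspaces: since $G_2$ is an outer inverse of $T_2$, we have $G_2T_2G_2 = G_2$ and $T_2G_2 = Q$, so $G_2$ restricted to $\img Q$ maps bijectively onto $\img G_2$. Conjugating the operator $PT_2$ (on the appropriate spaces) by this bijection should show that $G_2(PT_2)$ and $T_2(G_2P) = $ something comparable to $QP$ share the ``projector'' property.

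Concretely, I would argue as follows. Write $N = G_2 G_1 T_1 T_2 = G_2 P T_2$ and $M = T_2 G_2 G_1 T_1 = Q P$. Then $T_2 N = M T_2$ (since $T_2 G_2 P T_2 = Q P T_2 = M T_2$, using $T_2G_2 = Q$) and $N G_2 = G_2 M$ (similarly, $G_2 P T_2 G_2 = G_2 P Q$; and $M' := PQ$... one must be a little careful which projector sits where). In fact the cleanest route is: $N^2 = N$ is equivalent to $G_2 P T_2 G_2 P T_2 = G_2 P T_2$, i.e. $G_2 (PQP) T_2 = G_2 P T_2$ after inserting $T_2G_2 = Q$; and since $G_2$ is injective on $\img Q$ and $PT_2$ has image inside... one reduces $N^2 = N$ to $QPQP = QP$ on the relevant space, which is exactly $M^2 = M$. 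Conversely $M^2 = M$ forces $N^2 = N$ by the same manipulation read backwards. So the projector condition on $N$ is equivalent to the projector condition on $M = QP$. The image condition $\img N = \img G_2G_1$ should then come for free (or reduce to the projector condition): when $N$ is a projector, $\img N = \img G_2 P T_2 \subseteq \img G_2 \supseteq \img G_2 G_1$, and the reverse-type inclusions follow by applying the outer-inverse identities for $G_1$ and $G_2$; alternatively I would verify directly that whenever $QP$ is a projector, $G_2G_1(T_1T_2)G_2G_1 = G_2G_1$, which is the defining outer-inverse equation and sidesteps~\ref{oi:Nashed2} altogether.

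For the inner-inverse case, one uses that $G$ is an outer inverse of $T$ iff $T$ is an inner inverse of $G$, so the statement for inner inverses is obtained by the substitution $T_i \leftrightarrow G_i$, which swaps the roles of $P = G_1T_1$ with $T_1G_1$ and of $Q = T_2G_2$ with $G_2T_2$. Tracking this through, the condition ``$QP$ is a projector'' for outer inverses becomes ``$PQ$ is a projector'' for inner inverses, matching the statement. Alternatively one argues directly and symmetrically: $G_2G_1$ is an inner inverse of $T_1T_2$ iff $T_1T_2 G_2 G_1 T_1 T_2 = T_1 T_2$, and the same left-multiply-by/right-multiply-by bijection argument (now using that $T_1$ is injective on $\img P$ and surjective appropriately, and $T_2$ likewise) reduces this to $PQPQ = PQ$, i.e. $PQ$ is a projector.

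The main obstacle I anticipate is the bookkeeping in the reduction step --- making precise that $N^2 = N$ if and only if $M^2 = M$ --- because it requires keeping careful track of which of the two projectors ($P$, $Q$, and their interplay with $T_2$, $G_2$) acts on which subspace, and in particular exploiting the one-sided identities $T_2G_2 = Q$, $G_2T_2G_2 = G_2$ in the right order. Once that equivalence is established, concluding with~\ref{oi:Nashed2} (or directly with the outer-inverse equation) is routine, and the inner-inverse case follows by the duality $T \leftrightarrow G$ with no additional work.
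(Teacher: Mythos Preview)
Your ``alternative'' route --- verifying directly that $(QP)^2=QP$ implies $G_2G_1T_1T_2G_2G_1=G_2G_1$ --- is exactly the paper's proof, and it is the right one. The paper argues in two lines: multiply the outer-inverse equation $G_2G_1=G_2G_1T_1T_2G_2G_1$ by $T_2$ on the left and $T_1$ on the right to obtain $(QP)^2=QP$; conversely, multiply $T_2G_2G_1T_1=(T_2G_2G_1T_1)^2$ by $G_2$ on the left and $G_1$ on the right and use $G_iT_iG_i=G_i$ to recover $G_2G_1=G_2G_1T_1T_2G_2G_1$. The inner-inverse case is then exactly the swap $T_i\leftrightarrow G_i$ you describe.

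Your primary plan via Proposition~\ref{prop:OIChar}\ref{oi:Nashed2} is a detour with real gaps. Writing $N=G_2G_1T_1T_2$ and $M=QP=T_2G_2G_1T_1$, you have $N=AB$ and $M=BA$ with $A=G_2G_1T_1$ and $B=T_2$; but for general linear maps, idempotency of $AB$ does \emph{not} imply idempotency of $BA$ (take $A=(1\;0)$, $B=(0\;1)^T$: then $AB=0$ is idempotent while $BA$ is nilpotent and nonzero). The intertwining $T_2N=MT_2$ only yields $M^2T_2=MT_2$, i.e.\ $M^2=M$ on $\img T_2$, and there is no reason this extends to all of $V$ without invoking the full outer-inverse identity --- at which point you are back to the direct argument. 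Likewise, the image condition $\img N=\img G_2G_1$ does not ``come for free'' from $N^2=N$; it is the extra content that distinguishes idempotency of $GT$ from $G$ being an outer inverse. So drop the detour through \ref{oi:Nashed2} and keep your alternative: it is complete, and it is what the paper does.
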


\begin{proof}
Let $G_2G_1$ be an outer inverse of $T_1T_2$, that is, $G_2G_1 = G_2G_1T_1T_2G_2G_1$. 
Multiplying with $T_2$ from the left and with $T_1$ from the right yields
\begin{equation*} 
 T_2G_2G_1T_1 = T_2G_2G_1T_1T_2G_2G_1T_1,  
\end{equation*}
thus $QP = T_2G_2G_1T_1$ is a projector. For the other direction, we multiply the
previous equation with $G_2$ from the left and $G_1$ from the right and 
use that $G_1T_1G_1 =G_1$ and $G_2T_2G_2=G_2$.
The proof for inner inverses follows  by interchanging the roles of $T_i$ and $G_i$.
\end{proof}

\section{Kernel of compositions} \label{sec:Compositions}
 
We now describe the inverse image of a subspace under the composition of two linear maps
using inner inverses. For projectors, kernel and image of the composition can be expressed in
terms of kernel and image of the corresponding factors alone. Note that a projector is an inner inverse of itself.

\begin{proposition} \label{prop:KerComp}
 Let $T_1 \colon V \to W$ and $T_2 \colon U \to V$ be linear and $G_2$ an inner inverse of $T_2$. For a subspace $W_1 \leq W$,
 we have	
\begin{equation*}
 (T_1T_2)^{-1}(W_1) = G_2(T_1^{-1}(W_1) \cap\img T_2) \oplus \Ker T_2
\end{equation*}
for the inverse image of the composition. In particular,
\begin{equation*} 
 \Ker T_1T_2 = G_2(\Ker  T_1 \cap \img T_2) \oplus \Ker T_2.
\end{equation*}
\end{proposition}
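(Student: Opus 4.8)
The plan is to prove the equality of sets by a double inclusion, working directly from the definitions and from the inner-inverse identity $T_2 G_2 T_2 = T_2$. Throughout, write $S := (T_1 T_2)^{-1}(W_1) = \{u \in U : T_1 T_2 u \in W_1\}$ and let $B := G_2(T_1^{-1}(W_1) \cap \img T_2)$. First I would check that the sum $B + \Ker T_2$ is direct: if $b = G_2 v$ with $v \in T_1^{-1}(W_1) \cap \img T_2$, say $v = T_2 x$, lies also in $\Ker T_2$, then $T_2 b = 0$; but $T_2 b = T_2 G_2 v = T_2 G_2 T_2 x = T_2 x = v$, so $v = 0$ and hence $b = 0$. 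This simultaneously records the useful fact that $T_2 G_2$ acts as the identity on $\img T_2$, which will be used repeatedly.

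For the inclusion $B \oplus \Ker T_2 \subseteq S$: clearly $\Ker T_2 \subseteq \Ker T_1 T_2 \subseteq S$, so it suffices to show $B \subseteq S$. Given $b = G_2 v$ with $v = T_2 x \in T_1^{-1}(W_1)$, we have $T_1 T_2 b = T_1 T_2 G_2 v = T_1 v \in W_1$ by the identity just noted, so $b \in S$. For the reverse inclusion $S \subseteq B \oplus \Ker T_2$: take $u \in S$ and use the direct sum decomposition $U = \img G_2 T_2 \oplus \Ker T_2$ available because $G_2$ is an inner inverse of $T_2$ (equivalently, $G_2 T_2$ is a projector with that kernel). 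Write $u = G_2 T_2 u' + k$ with $k \in \Ker T_2$; then set $v := T_2 u' = T_2 u \in \img T_2$. Since $u \in S$ and $T_2 k = 0$, we get $T_1 v = T_1 T_2 u' $; one checks $T_1 T_2 u' = T_1 T_2 u \in W_1$ using $T_2 G_2 T_2 = T_2$, so $v \in T_1^{-1}(W_1) \cap \img T_2$ and $G_2 T_2 u' = G_2 v \in B$. Hence $u \in B + \Ker T_2$, and by directness $u \in B \oplus \Ker T_2$.

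The particular case is immediate on taking $W_1 = \{0\}$, since $T_1^{-1}(\{0\}) = \Ker T_1$ and $(T_1 T_2)^{-1}(\{0\}) = \Ker T_1 T_2$. The main thing to get right — and the only place where the argument is slightly delicate rather than routine — is the bookkeeping in the reverse inclusion: one must split $u$ using the projector $G_2 T_2$ and then verify that the ``$\img T_2$-part'' really lands in $T_1^{-1}(W_1)$, which hinges on the fact that $T_2$ and $G_2 T_2$ agree after applying $T_1 T_2$, i.e. on $T_2 G_2 T_2 = T_2$. I expect no genuine obstacle beyond keeping the identifications $v = T_2 u$ versus $G_2 v = G_2 T_2 u$ straight.
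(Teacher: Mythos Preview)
Your proof is correct and follows essentially the same approach as the paper: both establish the two inclusions via the identity $T_2G_2T_2=T_2$ (equivalently, $T_2G_2$ acts as the identity on $\img T_2$), and both obtain directness from the projector decomposition $U=\img G_2T_2\oplus\Ker T_2$. The only cosmetic differences are that the paper computes $u-G_2(T_2u)\in\Ker T_2$ directly rather than invoking the decomposition first, and cites Proposition~\ref{prop:OIChar} for directness instead of verifying $B\cap\Ker T_2=\{0\}$ by hand.
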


\begin{proof}
 Since $T_2G_2$ is a projector onto $\img T_2$ by Proposition \ref{prop:OIChar} \ref{oi:Nashed2} (interchanging the role of $T$ and $G$), we have
\begin{multline*}
 T_1T_2(G_2(T_1^{-1}(W_1)\cap \img T_2) + \Ker T_2) = T_1 Q_2 (T_1^{-1}(W_1) \cap \img T_2) + 0\\
= T_1 (T_1^{-1}(W_1) \cap \img T_2) \leq W_1 \cap \img T_1T_2 \leq W_1.
\end{multline*}
Conversely, let $ u\in (T_1T_2)^{-1}(W_1)$. Then $T_2u =v$ with $v \in T_1^{-1}(W_1)$. Since also $v \in \img T_2$, we have
\begin{equation*}
 T_2(u -G_2v) = T_2u - Q_2v = T_2 u -v = v-v= 0,
\end{equation*}
that is, $u - G_2v \in \Ker T_2$. Writing $u=G_2v+ u-G_2v$ yields $u \in G_2(T_1^{-1}(W_1) \cap\img T_2) + \Ker T_2$. 
The sum is direct since by Proposition \ref{prop:OIChar} \ref{oi:Nashed6} (interchanging the role of $T$ and $G$), we have $U = \Ker T_2 \oplus \img G_2T_2$.
\end{proof}

\begin{corollary} \label{cor:KerImComp}
 Let $T \colon V \to W$ be linear and let $P \colon V \to V$ and $Q \colon W \to W$ be projectors.
 Then
\begin{equation*}
 \Ker TQ  = (\Ker T \cap \img Q) \oplus \Ker Q \quad \text{and} \quad  \img PT  = (\img T + \Ker P) \cap \img P.
\end{equation*}
\end{corollary}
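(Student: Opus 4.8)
Corollary~\ref{cor:KerImComp} has two parts. The first, $\Ker TQ = (\Ker T \cap \img Q) \oplus \Ker Q$, should follow directly from Proposition~\ref{prop:KerComp} applied with $T_1 = T$, $T_2 = Q$, and $W_1 = \{0\}$, using that a projector is an inner inverse of itself. The second part, $\img PT = (\img T + \Ker P) \cap \img P$, is the ``image'' counterpart, which I expect to obtain by the duality principle referenced in the appendix, or by a direct argument.

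**Plan for the first part.** The plan is to invoke Proposition~\ref{prop:KerComp} with the substitutions $T_1 := T$, $T_2 := Q$, and the inner inverse $G_2 := Q$ (legitimate since $QQQ = Q$ as $Q$ is a projector). The proposition gives $\Ker TQ = Q(\Ker T \cap \img Q) \oplus \Ker Q$. It then remains to simplify $Q(\Ker T \cap \img Q)$ to $\Ker T \cap \img Q$. Since $Q$ restricted to $\img Q$ is the identity, and $\Ker T \cap \img Q \subseteq \img Q$, we get $Q(\Ker T \cap \img Q) = \Ker T \cap \img Q$, which finishes this half.

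**Plan for the second part.** There are two natural routes. The direct route: show both inclusions of $\img PT = (\img T + \Ker P) \cap \img P$. For ``$\subseteq$'': any $PTv$ lies in $\img P$, and also $PTv = Tv - (\id - P)Tv \in \img T + \Ker P$. For ``$\supseteq$'': take $w \in \img P$ with $w = Tv + k$ for some $k \in \Ker P$; then $Pw = w$ and $Pw = PTv + Pk = PTv$, so $w = PTv \in \img PT$. This is short and self-contained. The cleaner route, matching the paper's stated methodology, is to apply the duality principle from Appendix~\ref{sec:Duality}: dualizing the first identity $\Ker TQ = (\Ker T \cap \img Q) \oplus \Ker Q$ — under which kernels and images swap, inclusions reverse, intersections become sums, and projectors dualize to projectors — should yield exactly $\img PT = (\img T + \Ker P) \cap \img P$. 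I would present whichever the paper's infrastructure makes most economical; given the emphasis on duality, a one-line appeal to the duality principle is likely intended.

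**Anticipated obstacles.** I expect no serious obstacle: the first part is essentially a specialization of an already-proved proposition, and the second is its formal dual. The only point requiring a little care is verifying that the hypotheses of Proposition~\ref{prop:KerComp} are genuinely met (that $Q$ is an inner inverse of itself, which is immediate) and that the directness of the sum transfers correctly — but the proposition already asserts directness, so nothing new is needed. If the duality route is taken, the mild subtlety is checking that the duality principle as stated in the appendix applies to identities of this exact shape (sum of two subspaces, one an intersection); I would quote the appendix precisely rather than reprove it.
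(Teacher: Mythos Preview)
Your proposal is correct and matches the paper's proof essentially line for line: the paper applies Proposition~\ref{prop:KerComp} with $T_2=Q=G_2$ to obtain $\Ker TQ = Q(\Ker T \cap \img Q)\oplus \Ker Q = (\Ker T \cap \img Q)\oplus \Ker Q$, and then invokes the duality principle~\ref{DualPrin} for the image identity. Your alternative direct argument for the second part is also fine, but the paper indeed takes the one-line duality route you anticipated.
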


\begin{proof}
 Applying Proposition \ref{prop:KerComp} yields
\begin{equation*}
 \Ker TQ = Q(\Ker T \cap \img Q) \oplus \Ker Q = (\Ker T \cap \img Q) \oplus \Ker Q.
\end{equation*}
 The statement for the image follows from the duality principle \ref{DualPrin}.
\end{proof}

This result generalizes \cite[Lemma~2.2]{Werner1992}, where the kernel and image of
a product $PQ$ of two projectors are computed as above, when  $PQ$ is again a projector.

\section{Products of projectors} \label{sec:Projectors}
In view of Proposition \ref{prop:CompIINS}, we study  necessary and sufficient conditions
for the product of two projectors to be a projector.
Throughout this section let
 $P, Q \colon V \to V$
denote projectors.

The first of the following necessary and sufficient conditions for the product of $P$ and $Q$ to be a projector is mentioned
as an exercise without proof in 
\cite[p.\ 339]{BrownPage1970}.  
In \cite[Lemma~3]{GrossTrenkler1998} the same result is formulated for matrices but the
proof is valid for arbitrary vector spaces. The second necessary and sufficient condition for the matrix case is given in \cite[Lemma~2.2]{Werner1992}. 
The simpler proof from \cite{TakaneYanai1999} carries over to arbitrary vector spaces.

\begin{lemma} 
The composition $PQ$ is a projector if and only if
\[\img PQ \leq \img Q \oplus (\Ker P \cap \Ker Q)\]
if and only if
\[ 
 \img Q \leq \img P \oplus (\Ker P \cap \img Q) \oplus (\Ker P \cap \Ker Q) .
\]
\end{lemma}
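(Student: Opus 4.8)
The plan is to prove the two ``if and only if'' statements in turn, using the standard structural facts about projectors: for any projector $R\colon V\to V$ we have $V=\img R\oplus\Ker R$, and $Rv=v$ exactly when $v\in\img R$. The central observation is that $PQ$ is a projector precisely when $(PQ)^2=PQ$, i.e.\ when $PQPQ=PQ$; since $PQ$ already sends everything into $\img P$, and since $P$ is the identity on $\img P$, this is equivalent to $PQ$ being the identity on its own image, that is, to $PQ(PQv)=PQv$ for all $v$, which in turn (writing $w=Qv$, noting $\img PQ\le\img P$) is equivalent to: for every $w$ in $\img PQ$ we have $Qw\in\img P+\Ker P$ combined with $P$ acting correctly — more cleanly, $PQ$ is a projector iff $Q$ acts as the identity, modulo $\Ker P$, on $\img PQ$. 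I expect the cleanest route is: $PQ$ is idempotent $\iff$ $PQw=w$ for all $w\in\img PQ$ $\iff$ $w-Qw\in\Ker P$ for all $w\in\img PQ$ $\iff$ $\img PQ\le \img Q+\Ker P$, and then refine the sum using that $\img PQ\le\img P$ already.

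First I would establish the first equivalence. Assume $PQ$ is a projector. Take $x\in\img PQ$, so $PQx=x$. Then $Qx - x = Qx - PQx = (\id-P)Qx\in\Ker P$, hence $x = Qx - (Qx-x)$ with $Qx\in\img Q$ and $Qx - x\in\Ker P$; moreover $Qx - x \in \Ker P$, and I claim it also lies in $\Ker Q$: applying $Q$ gives $Q(Qx-x) = Qx - Qx = 0$ since $Qx\in\img Q$ is fixed by $Q$. Wait — $Qx-x$ need not be $Q(Qx)-Qx$; let me instead argue $Qx - x = Qx - PQx$, and I want this in $\Ker Q$. Hmm, this needs $x\in\img Q$, which holds here since $x\in\img PQ\le\img Q$? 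That inclusion is false in general. The correct decomposition is: from $x\in\img PQ$ write $x=Py$ with $y=Qx$ (as $x=PQx$, set $y=Qx$, so $x=Py$ and $y\in\img Q$); then $y - x = y - Py = (\id-P)y\in\Ker P$, and $Q(y-x) = Qy - Qx = y - Qx$ — this is $0$ only if $Qx=y=Qx$, which is automatic. So $y-x\in\Ker P\cap\Ker Q$ after checking $y-x\in\Ker Q$; indeed $Q(y-x)=Qy-Qx=Qy-y$ (since $y=Qx$ gives $Qx=Qy$? no). The bookkeeping here is the part that needs care: one must correctly identify that $x=PQx$ forces $x\in\img P$ and, setting $y:=Qx$, that $x-y\in\Ker P$, and separately that $x-y\in\Ker Q$ using idempotency of $Q$ on the already-$Q$-reduced vector $y$. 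This is the main obstacle — not conceptually deep, but the direct-sum refinement ($\img Q+\Ker P$ versus $\img Q\oplus(\Ker P\cap\Ker Q)$, and the further split by $\img P$) requires threading several projector identities without circular reasoning.

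For the converse of the first equivalence, assume $\img PQ\le\img Q\oplus(\Ker P\cap\Ker Q)$. Given any $v$, write $PQv = q + k$ with $q\in\img Q$, $k\in\Ker P\cap\Ker Q$. Applying $P$: $PQv = P^2Qv$? That is trivially $PQv$; instead apply $PQ$ to both sides: $PQ(PQv) = PQq + PQk = PQq + 0$ (since $k\in\Ker Q$). Now I need $PQq = PQv$; since $q = PQv - k$ and $Qk=0$, we get $Qq = Q(PQv) - Qk = QPQv$, so $PQq = PQPQv$. Hmm, that is circular. The resolution: from $PQv = q+k$ with $q\in\img Q$, so $Qq=q$; then $Q(PQv)=Qq+Qk=q$, hence $PQ(PQv)=P(Q(PQv))=Pq$. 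And $Pq = P(PQv-k)=PPQv - Pk = PQv - 0 = PQv$ since $k\in\Ker P$. So $PQPQv=PQv$, giving idempotency. Then I would repeat this kind of argument for the second equivalence — showing $\img PQ\le\img Q\oplus(\Ker P\cap\Ker Q)$ is equivalent to $\img Q\le\img P\oplus(\Ker P\cap\img Q)\oplus(\Ker P\cap\Ker Q)$ — by applying $P$ and the complementary projector $\id-P$ to a general $q\in\img Q$, using $QPq\in\img Q$, and invoking $V=\img Q\oplus\Ker Q$ together with $V=\img P\oplus\Ker P$ to split $q$; the nontrivial direction is recovering the first inclusion from the second, where one writes $PQv=PQ(Qv)$ and expands $Qv\in\img Q$ by the assumed decomposition. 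Throughout I would cite $V=\img R\oplus\Ker R$ for projectors $R$ freely, and note that the paper already flags these arguments as carrying over verbatim from \cite{GrossTrenkler1998, TakaneYanai1999}, so I would keep the exposition terse, emphasizing only the direct-sum refinements that are genuinely needed beyond the bare idempotency criterion.
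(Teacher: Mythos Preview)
The paper does not supply its own proof of this lemma; it only records that the first equivalence is \cite[Lemma~3]{GrossTrenkler1998} and the second is in \cite{Werner1992,TakaneYanai1999}, observing that those matrix arguments go through verbatim over arbitrary vector spaces. Your elementwise approach via the identities $P^2=P$, $Q^2=Q$ and $V=\img R\oplus\Ker R$ is exactly what those references do, so you are on the right track and your converse of the first equivalence is already clean.

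The one place where your write-up genuinely tangles is the forward direction of the first equivalence, and it is worth straightening out once so the rest flows. With $x\in\img PQ$ and $PQ$ idempotent you have $PQx=x$; set $y:=Qx\in\img Q$. Then $P(x-y)=Px-Py=PQx-PQx=0$ (using $Px=P(PQx)=PQx=x$ and $Py=PQx=x$), so $x-y\in\Ker P$; and $Q(x-y)=Qx-Qy=y-Q(Qx)=y-Qx=y-y=0$, so $x-y\in\Ker Q$. Hence $x=y+(x-y)\in\img Q+(\Ker P\cap\Ker Q)$, and the sum is direct because $\img Q\cap\Ker Q=\{0\}$. Your confusion came from writing $Q(y-x)=Qy-Qx$ and then hesitating over $Qx$: simply note $Qx=y$ by definition and $Qy=y$ since $y\in\img Q$. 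For the second equivalence your sketch is fine; the forward direction is most transparent if you use that $PQ$ is now a projector, decompose $q\in\img Q$ as $q=Pq+(q-Pq)$, and observe $q-Pq\in\Ker PQ=(\Ker P\cap\img Q)\oplus\Ker Q$ by Corollary~\ref{cor:KerImComp}, with the $\Ker Q$-component automatically landing in $\Ker P$ since $q-Pq\in\Ker P$.
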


We obtain the following characterization of the idempotency of $PQ$ in terms of the kernels and images of $P$ and~$Q$ alone. 

\begin{theorem} \label{thm:GTMainResult}
The following statements are equivalent:
\begin{enumerate} [label=(\roman*)]
 \item The composition $PQ$ is a projector. 		\label{PQProj1}
 \item $\img P \cap (\img Q + \Ker P) \leq \img Q \oplus ( \Ker P \cap \Ker Q)$ \label{PQProj2}
 \item $\img Q \leq \img P \oplus (\Ker P \cap \img Q) \oplus (\Ker P \cap \Ker Q)$ \label{PQProj4}
 \item $\Ker Q \oplus (\Ker P \cap \img Q) \geq \Ker P \cap (\img Q + \img P)$ \label{PQProj3}
 \item $\Ker P \geq \Ker Q \cap (\img Q + \Ker P) \cap (\img Q + \img P)$ \label{PQProj5}
\end{enumerate}
\end{theorem}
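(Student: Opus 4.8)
The plan is to derive all five statements from the lemma preceding the theorem, from Corollary~\ref{cor:KerImComp}, and from the duality principle \ref{DualPrin}, with essentially no new computation. First I would dispatch \ref{PQProj1} $\Leftrightarrow$ \ref{PQProj2} $\Leftrightarrow$ \ref{PQProj4}. Condition \ref{PQProj4} is verbatim the second characterization in the preceding lemma, so \ref{PQProj1} $\Leftrightarrow$ \ref{PQProj4} is already at hand. For \ref{PQProj2}, I would apply Corollary~\ref{cor:KerImComp} with $T=Q$ to rewrite $\img PQ=\img P\cap(\img Q+\Ker P)$ and substitute this into the first characterization of that lemma, namely $\img PQ\leq\img Q\oplus(\Ker P\cap\Ker Q)$; the result is exactly \ref{PQProj2}. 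The right-hand side is automatically a direct sum because $\img Q\cap\Ker Q=\{0\}$, so no information is lost in passing back and forth.

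Next I would obtain \ref{PQProj3} and \ref{PQProj5} as the duals of \ref{PQProj2} and \ref{PQProj4}. The transpose of a projector is again a projector, with $\img P^{\mathsf T}=(\Ker P)^{\perp}$ and $\Ker P^{\mathsf T}=(\img P)^{\perp}$, and $(PQ)^{\mathsf T}=Q^{\mathsf T}P^{\mathsf T}$; hence $PQ$ is a projector if and only if $Q^{\mathsf T}P^{\mathsf T}$ is. Applying the already-proved equivalence \ref{PQProj1} $\Leftrightarrow$ \ref{PQProj2} to the pair $(Q^{\mathsf T},P^{\mathsf T})$ and translating the resulting condition back into $V$ by the annihilator rules $(S_1+S_2)^{\perp}=S_1^{\perp}\cap S_2^{\perp}$, $(S_1\cap S_2)^{\perp}=S_1^{\perp}+S_2^{\perp}$, $S_1\leq S_2\Leftrightarrow S_2^{\perp}\leq S_1^{\perp}$, and $S^{\perp\perp}=S$ (all valid in the algebraic dual of an arbitrary vector space; cf.\ Appendix~\ref{sec:Duality}) turns it term by term into \ref{PQProj3}; doing the same with \ref{PQProj1} $\Leftrightarrow$ \ref{PQProj4} gives \ref{PQProj5}. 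Equivalently, one simply invokes the duality principle, observing that \ref{PQProj3} and \ref{PQProj5} are the formal duals of \ref{PQProj2} and \ref{PQProj4}.

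The computations are routine; the one point that genuinely needs attention is that transposition reverses the order of composition, so the dual conditions must be produced by feeding $(Q^{\mathsf T},P^{\mathsf T})$ --- not $(P^{\mathsf T},Q^{\mathsf T})$ --- into the primal ones. One should also check that the sums written with $\oplus$ remain direct after dualization, which again follows from $\img Q\cap\Ker Q=\{0\}$, and, for a reader who prefers not to quote the duality principle, that the identity $(S_1\cap S_2)^{\perp}=S_1^{\perp}+S_2^{\perp}$ really does hold beyond the finite-dimensional case --- a functional annihilating $S_1\cap S_2$ can be split by extending it off a complement of $S_1\cap S_2$ in $S_2$. Apart from this bookkeeping I anticipate no real obstacle, since the substance of the argument already resides in the preceding lemma and Corollary~\ref{cor:KerImComp}.
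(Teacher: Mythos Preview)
Your proposal is correct and follows essentially the same route as the paper's proof: the equivalences \ref{PQProj1}\,$\Leftrightarrow$\,\ref{PQProj2}\,$\Leftrightarrow$\,\ref{PQProj4} come from the preceding lemma together with Corollary~\ref{cor:KerImComp}, and \ref{PQProj3}, \ref{PQProj5} are obtained from \ref{PQProj2}, \ref{PQProj4} by the duality principle. Your additional explicit transposition argument (applying the primal equivalences to $(Q^{\mathsf T},P^{\mathsf T})$ and unwinding via Proposition~\ref{prop:SumCap}) merely spells out what the duality principle encodes, so there is no substantive difference.
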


\begin{proof}
The equivalence of~\ref{PQProj1},~\ref{PQProj2},~\ref{PQProj4} follow from the previous lemma and Corollary~\ref{cor:KerImComp}. By the duality principle~\ref{DualPrin}, the last two conditions are equivalent to~\ref{PQProj2} and~\ref{PQProj4}, respectively. 
\end{proof}

For \agi s, it is also interesting to have sufficient conditions for $PQ$ as well as $QP$ to be projectors; 
for example, if $P$ and $Q$ commute. 
This can again be characterized in terms of the images and kernels of  $P$ and $Q$ alone.
If  $PQ = QP$, one sees with Corollary~\ref{cor:KerImComp}  that
\begin{equation} \label{eq:KerSum}
 \img PQ = \img P \cap \img Q \quad\text{and}\quad  \Ker PQ  = \Ker P + \Ker Q. 
\end{equation}
In general, these conditions are necessary 
but not sufficient for commutativity of $P$ and $Q$, see
\cite[Ex.\ 1]{GrossTrenkler1998}. 

Using Corollary~\ref{cor:KerImComp}, modularity~\eqref{eq:Modularity}, and~\eqref{eq:ModImplication}, one obtains the following characterization of projectors with image or kernel as in~\eqref{eq:KerSum}; for further details see~\cite{Korporal2012}.
For the commutativity of projectors see also~\cite[p. 339]{BrownPage1970}.

\begin{proposition} \label{prop:CorLem}
The composition $PQ$ is a projector with 
 \begin{enumerate} [label=(\roman*)]
 \item  $\img PQ = \img P \cap \img Q$ if and only if  
 \[\img Q = (\img P \cap \img Q) \oplus (\Ker P \cap \img Q).\]
 \item  $\Ker PQ = \Ker P + \Ker Q$ if and only if \[\Ker P = (\Ker P \cap \Ker Q) \oplus (\Ker P \cap \img Q).\]
 \end{enumerate}
\end{proposition}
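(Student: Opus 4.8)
The plan is to prove each of the two equivalences by combining the characterization of idempotency of $PQ$ from Theorem~\ref{thm:GTMainResult} with a direct decomposition argument using modularity. Since part (ii) is obtained from part (i) by the duality principle~\ref{DualPrin} (kernels and images swap, $P$ and $Q$ may be kept or swapped as the principle dictates), the real work is in part (i), and I will concentrate on that; part (ii) then follows formally.

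For part (i), I would argue both directions. For the forward direction, assume $PQ$ is a projector with $\img PQ = \img P \cap \img Q$. By Corollary~\ref{cor:KerImComp} applied to $P$ and $Q$ (writing $PQ = P\circ Q$ and reading $Q$ as a projector), we have $\img PQ = (\img Q + \Ker P) \cap \img P$, so the hypothesis says $(\img Q + \Ker P) \cap \img P = \img P \cap \img Q$. I want to conclude $\img Q = (\img P \cap \img Q) \oplus (\Ker P \cap \img Q)$. The sum is automatically direct because $\img P \cap \Ker P = \{0\}$. For the inclusion $\img Q \leq (\img P \cap \img Q) + (\Ker P \cap \img Q)$: take $x \in \img Q$. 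Using $V = \img P \oplus \Ker P$ write $x = Px + (x - Px)$ with $Px \in \img P$ and $x - Px \in \Ker P$. Then $Px = PQx \in \img PQ = \img P \cap \img Q$, and $x - Px = x - PQx$; since both $x$ and $PQx$ lie in $\img Q$ (the latter because $\img PQ = \img P \cap \img Q \leq \img Q$), we get $x - Px \in \Ker P \cap \img Q$. Hence the decomposition holds.

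For the converse, assume $\img Q = (\img P \cap \img Q) \oplus (\Ker P \cap \img Q)$. First I would check that $PQ$ is a projector using Theorem~\ref{thm:GTMainResult}\ref{PQProj4}: that condition reads $\img Q \leq \img P \oplus (\Ker P \cap \img Q) \oplus (\Ker P \cap \Ker Q)$, which is immediate from the hypothesis since $\img P \cap \img Q \leq \img P$. Then I must identify $\img PQ$. Again $\img PQ = (\img Q + \Ker P) \cap \img P$ by Corollary~\ref{cor:KerImComp}. From the hypothesis, $\img Q + \Ker P = (\img P \cap \img Q) + (\Ker P \cap \img Q) + \Ker P = (\img P \cap \img Q) + \Ker P$; intersecting with $\img P$ and invoking modularity~\eqref{eq:Modularity} (since $\img P \cap \img Q \leq \img P$) gives $((\img P \cap \img Q) + \Ker P) \cap \img P = (\img P \cap \img Q) + (\Ker P \cap \img P) = \img P \cap \img Q$. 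Hence $\img PQ = \img P \cap \img Q$, as desired.

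The main obstacle I anticipate is bookkeeping with the modular law: one has to be careful that at each step the subspace being "factored through" the intersection is genuinely contained in the term one keeps fixed, and to confirm that all the sums claimed to be direct really are (these directness claims all reduce to $\img P \cap \Ker P = \{0\}$ or its dual). Once part (i) is established, part (ii) should follow by a routine application of the duality principle~\ref{DualPrin} together with the identity $\Ker PQ = \Ker P + \Ker Q$ read off from Corollary~\ref{cor:KerImComp} and~\eqref{eq:KerSum}, so I would simply state that part (ii) is the dual of part (i) rather than repeating the argument.
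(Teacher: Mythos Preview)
Your proof of part~(i) is correct and matches the paper's intended approach (Corollary~\ref{cor:KerImComp} plus modularity, together with Theorem~\ref{thm:GTMainResult} for the idempotency check in the converse direction). The paper also cites~\eqref{eq:ModImplication}, which points toward a purely lattice-theoretic forward direction in place of your element-chasing decomposition $x = Px + (x-Px)$; either route is fine.

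Your handling of part~(ii), however, is not quite right. The duality principle~\ref{DualPrin} does \emph{not} carry the direct-sum decomposition $\img Q = (\img P\cap\img Q)\oplus(\Ker P\cap\img Q)$ to $\Ker P = (\Ker P\cap\Ker Q)\oplus(\Ker P\cap\img Q)$. A statement $C = A\oplus B$ unpacks as $C=A+B$ together with $A\cap B=\{0\}$; under the principle this becomes $C'=A'\cap B'$ together with $A'+B'=V$, which is not again a direct-sum decomposition of a proper subspace. Concretely, dualizing~(i) and relabeling gives the condition $\Ker P = (\Ker P+\Ker Q)\cap(\img Q+\Ker P)$, the companion condition $(\Ker P+\Ker Q)+(\img Q+\Ker P)=V$ being automatic. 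This \emph{is} equivalent to the condition stated in~(ii), but establishing that equivalence requires another pass with modularity and is not a one-line invocation of~\ref{DualPrin}. (Your reference to~\eqref{eq:KerSum} is also misplaced: that equation concerns commuting projectors and is not an identity available here.) The cleanest fix is simply to rerun your part~(i) argument with the roles exchanged: use $\Ker PQ = (\Ker P\cap\img Q)\oplus\Ker Q$ from Corollary~\ref{cor:KerImComp}, invoke Theorem~\ref{thm:GTMainResult}\ref{PQProj3} for idempotency, and apply modularity exactly as before.
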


\begin{corollary} \label{cor:PQ=QP}
  We have $PQ=QP$
if and only if
\begin{equation*}\label{eq:PQ=QPImQ}
 \img Q = (\img P \cap \img Q)  \oplus (\Ker P \cap \img Q)
\end{equation*}
 and
\begin{equation*}\label{eq:PQ=QPKerQ}
 \Ker Q =  (\img P \cap \Ker Q) \oplus (\Ker P \cap \Ker Q). 
\end{equation*}
\end{corollary}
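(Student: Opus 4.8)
The plan is to deduce the characterization of $PQ = QP$ from Proposition~\ref{prop:CorLem} together with the symmetric statements obtained by interchanging $P$ and $Q$. First I would observe that if $PQ = QP$, then this common map is automatically a projector, and by~\eqref{eq:KerSum} it has image $\img P \cap \img Q$ and kernel $\Ker P + \Ker Q$. Applying Proposition~\ref{prop:CorLem}\,(i) to $PQ$ gives the first displayed identity, and applying Proposition~\ref{prop:CorLem}\,(ii) to $QP$ (that is, with the roles of $P$ and $Q$ exchanged, so that the kernel of the product is $\Ker Q + \Ker P = \Ker P + \Ker Q$) gives $\Ker Q = (\img P \cap \Ker Q) \oplus (\Ker P \cap \Ker Q)$, which is the second displayed identity. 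This settles the "only if" direction.

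For the converse, I would assume the two direct-sum decompositions and show first that both $PQ$ and $QP$ are projectors. From the decomposition of $\img Q$ and Proposition~\ref{prop:CorLem}\,(i), the composition $PQ$ is a projector with $\img PQ = \img P \cap \img Q$; symmetrically, from the decomposition of $\Ker Q$ and Proposition~\ref{prop:CorLem}\,(ii) applied with $P$ and $Q$ interchanged, the composition $QP$ is a projector with $\Ker QP = \Ker P + \Ker Q$. To finish I need to identify also $\Ker PQ$ and $\img QP$. Here I would use Corollary~\ref{cor:KerImComp}: $\Ker PQ = (\Ker P \cap \img Q) \oplus \Ker Q$, and by the hypothesis $\Ker Q = (\img P \cap \Ker Q) \oplus (\Ker P \cap \Ker Q)$ together with the modular law this should collapse to $\Ker P + \Ker Q$; dually, $\img QP = (\img Q + \Ker P) \cap \img Q$-type expression reduces via the first hypothesis to $\img P \cap \img Q$. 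Thus $PQ$ and $QP$ are projectors with the same image and the same kernel.

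The last step is the elementary fact that two projectors on $V$ with equal image and equal kernel coincide, which identifies $PQ = QP$. I expect the main obstacle to be the bookkeeping in the converse direction: one must be careful that Proposition~\ref{prop:CorLem}, as stated, describes when a product is a projector with a \emph{prescribed} image or kernel, so each of the two hypotheses must be fed into the correct half of that proposition and, for the $QP$ statement, with $P$ and $Q$ consistently swapped. The reductions of $\Ker PQ$ and $\img QP$ to $\Ker P + \Ker Q$ and $\img P \cap \img Q$ rely on the modular law~\eqref{eq:Modularity} and the implication~\eqref{eq:ModImplication}, exactly as in the proof of Proposition~\ref{prop:CorLem}, so once those are in place the argument closes. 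Alternatively, one could phrase the whole converse more symmetrically by invoking the duality principle~\ref{DualPrin} to get the "$\img QP$" facts for free from the "$\Ker PQ$" facts, which would shorten the calculation.
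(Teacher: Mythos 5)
Your ``only if'' direction is correct and is essentially the intended one: $PQ=QP$ forces $PQ$ and $QP$ to be projectors with image $\img P\cap\img Q$ and kernel $\Ker P+\Ker Q$, and Proposition~\ref{prop:CorLem}\,(i) applied to $PQ$ together with (ii) applied to $QP$ gives the two displayed decompositions. The gap is in the converse, at the step where you assert that $\Ker PQ=(\Ker P\cap\img Q)\oplus\Ker Q$ ``collapses to $\Ker P+\Ker Q$'' using the hypothesis on $\Ker Q$ and the modular law. Modularity reduces the desired collapse to $\Ker P=(\Ker P\cap\img Q)\oplus(\Ker P\cap\Ker Q)$, which is a decomposition of $\Ker P$ (the \emph{unswapped} condition of Proposition~\ref{prop:CorLem}\,(ii)) and is not among your hypotheses; the hypothesis you cite only yields $\Ker PQ=(\Ker P\cap\img Q)\oplus(\img P\cap\Ker Q)\oplus(\Ker P\cap\Ker Q)$, which need not contain $\Ker P$. (Take $\img Q$ and $\Ker P$ to be two distinct lines in $\R^2$ with $\Ker Q=\img P$ a third line: the $\Ker Q$-hypothesis holds, but $\Ker PQ=\Ker Q\neq\Ker P+\Ker Q=V$.) The same issue recurs for $\img QP$. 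The missing idea is to combine the two hypotheses \emph{before} doing these reductions: together with $V=\img Q\oplus\Ker Q$ they give
\[
V=(\img P\cap\img Q)\oplus(\Ker P\cap\img Q)\oplus(\img P\cap\Ker Q)\oplus(\Ker P\cap\Ker Q),
\]
and then \eqref{eq:ModImplication}, applied to this refinement of $V=\img P\oplus\Ker P$, yields the decompositions of $\img P$ and $\Ker P$ that your reductions actually require.

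Once you have that four-fold decomposition, though, the projector-comparison machinery becomes superfluous: on each of the four summands both $P$ and $Q$ act as $0$ or as the identity, so $PQ$ and $QP$ agree on subspaces that span $V$ and hence are equal. I would replace the second half of your converse by this direct check; it uses nothing beyond the two hypotheses and is the classical argument behind the commutativity criterion the paper cites.
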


In~\cite[Thm.~4]{GrossTrenkler1998} and~\cite[Thm.~3.2]{BaksalaryBaksalary2002} different necessary and sufficient conditions for the 
commutativity of two projectors are given, but both require the computation of $PQ$ as well as of $QP$. 

\section{Reverse order law for generalized inverses} \label{sec:ROL}

Proposition \ref{prop:CompIINS} and Theorem \ref{thm:GTMainResult} together give  necessary and
sufficient conditions for the reverse order law for outer inverses to hold, in terms of the 
defining spaces $B_i$ and $E_i$ alone.

\begin{theorem} \label{thm:G2G1OI}
 Let $T_1 \colon V \to W$ and $T_2 \colon U \to V$ be linear with \oi s $G_1=\OI(T_1, B_1, E_1)$ and $G_2=\OI(T_2, B_2, E_2)$. 
The following conditions are equivalent:
\begin{enumerate} [label=(\roman*)]
 \item $G_2G_1$ is an outer inverse of $T_1T_2$.		\label{G2G1OI}
 \item $T_2(B_2) \cap (B_1 + E_2) \leq B_1 \oplus ( E_2 \cap T_1^{-1}(E_1))$ \label{G2G1OI2}
 \item $B_1 \leq T_2(B_2) \oplus (E_2 \cap B_1) \oplus (E_2 \cap T_1^{-1}(E_1))$ \label{G2G1OI4}
 \item $T_1^{-1}(E_1) \oplus (E_2 \cap B_1) \geq E_2 \cap (B_1 + T_2(B_2))$ \label{G2G1OI3}
 \item $E_2 \geq T_1^{-1}(E_1) \cap (B_1 + E_2) \cap (B_1 + T_2(B_2))$ \label{G2G1OI5}
\end{enumerate}
\end{theorem}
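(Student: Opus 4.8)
The plan is to reduce the statement to Theorem~\ref{thm:GTMainResult} by way of Proposition~\ref{prop:CompIINS}. First I would recall, from the construction of $\OI(T_i,B_i,E_i)$ in Section~\ref{sec:GI}, that $P := G_1T_1$ and $Q := T_2G_2$ are projectors and read off their images and kernels. From $G_1T_1 = \id - P_1$, where $P_1$ is the projector with $\img P_1 = T_1^{-1}(E_1)$ and $\Ker P_1 = B_1$, passing to the complementary projector interchanges image and kernel, so $\img P = \Ker P_1 = B_1$ and $\Ker P = \img P_1 = T_1^{-1}(E_1)$. On the other hand $Q = T_2G_2$ is the projector with $\img Q = T_2(B_2)$ and $\Ker Q = E_2$ directly from \eqref{eq:DefQOI}.

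Next, Proposition~\ref{prop:CompIINS}, applied to $T_1$ and $T_2$ with the given outer inverses, states that condition~\ref{G2G1OI} holds if and only if $QP$ is a projector. It remains to translate the conditions of Theorem~\ref{thm:GTMainResult} for $QP$ to be a projector into statements about $B_1$, $E_1$, $B_2$, $E_2$. I would apply Theorem~\ref{thm:GTMainResult} to the ordered pair $(Q,P)$ in place of $(P,Q)$, so that its condition~\ref{PQProj1} reads ``$QP$ is a projector'', and substitute $\img P = B_1$, $\Ker P = T_1^{-1}(E_1)$, $\img Q = T_2(B_2)$, and $\Ker Q = E_2$ into conditions~\ref{PQProj2}--\ref{PQProj5}. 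This yields exactly~\ref{G2G1OI2}--\ref{G2G1OI5}, completing the chain of equivalences.

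The argument introduces no new ideas beyond Proposition~\ref{prop:CompIINS} and Theorem~\ref{thm:GTMainResult}; the only points requiring care are the bookkeeping of images and kernels of $G_1T_1$ and $T_2G_2$ (in particular that $\img(G_1T_1) = B_1$, not $T_1^{-1}(E_1)$, because of the passage to the complementary projector), and carrying out the interchange $P \leftrightarrow Q$ consistently through all four algebraic conditions. I expect the transcription step --- matching each substituted condition to the correct one of~\ref{G2G1OI2}--\ref{G2G1OI5} --- to be the only place where an error could slip in; there is no genuine mathematical obstacle.
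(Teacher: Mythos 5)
Your proposal is correct and follows exactly the paper's own route: identify $\img(G_1T_1)=B_1$, $\Ker(G_1T_1)=T_1^{-1}(E_1)$, $\img(T_2G_2)=T_2(B_2)$, $\Ker(T_2G_2)=E_2$, reduce via Proposition~\ref{prop:CompIINS} to $QP$ being a projector, and apply Theorem~\ref{thm:GTMainResult} with the roles of $P$ and $Q$ interchanged. The substitutions transcribe correctly to conditions~\ref{G2G1OI2}--\ref{G2G1OI5}, so there is nothing to add.
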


\begin{proof} 
Recall that $\img G_i = B_i$ and $\Ker G_i = E_i$, and $Q=T_2G_2$ and $P = G_1T_1$ are projectors
with
\begin{equation*}
 \img P = B_1, \quad \Ker P = T_1^{-1}(E_1), \quad \img Q = T_2(B_2), \quad \text{and} \quad
 \Ker Q = E_2.
\end{equation*}
By Proposition \ref{prop:CompIINS}, $G_2G_1$ is an outer inverse if and only if $QP$ is a projector.
 Applying Theorem \ref{thm:GTMainResult} proves the claim.
\end{proof}

In the following theorem, we give the analogous
conditions for inner inverses, where $P =G_1T_1$  and $Q = T_2G_2$ 
are the projectors corresponding to the direct sums in \eqref{eq:DirSumDec}.
Note that the conditions for inner inverses only depend on the choice of $B_1$ and $E_2$,
but not on  $B_2$ and $E_1$.

The characterization \ref{G2G1GI4} and the orthogonal of  \ref{G2G1GI5} in the following theorem generalize 
\cite[Thm. 2.3]{Werner1992} to arbitrary vector spaces.

\begin{theorem}\label{thm:G2G1GI}
 Let $T_1 \colon V \to W$ and $T_2 \colon U \to V$ be linear with inner inverses $G_1 \in \II(T_1, B_1, E_1)$ 
 and $G_2 \in \II(T_2, B_2, E_2)$.
The following conditions are equivalent:
\begin{enumerate} [label=(\roman*)]
 \item $G_2G_1 $ is an inner inverse of $T_1T_2$. 		\label{G2G1GI}
 \item $B_1 \cap (\img T_2 + \Ker T_1) \leq \img T_2 \oplus ( \Ker T_1 \cap E_2)$ \label{G2G1GI2}
 \item $\img T_2 \leq B_1 \oplus (\Ker T_1 \cap \img T_2) \oplus (\Ker T_1 \cap E_2)$ \label{G2G1GI4}
 \item $E_2 \oplus (\Ker T_1 \cap \img T_2) \geq \Ker T_1 \cap (\img T_2 + B_1)$ \label{G2G1GI3}
 \item $\Ker T_1 \geq E_2 \cap (\img T_2 + \Ker T_1) \cap (\img T_2 + B_1)$ \label{G2G1GI5}
\end{enumerate}
\end{theorem}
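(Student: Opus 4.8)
The plan is to reduce Theorem~\ref{thm:G2G1GI} to Theorem~\ref{thm:GTMainResult} via Proposition~\ref{prop:CompIINS}, exactly as in the proof of Theorem~\ref{thm:G2G1OI}, but now tracking the kernels and images of the relevant projectors in the inner-inverse setting. First I would recall that since $G_1 \in \II(T_1, B_1, E_1)$ and $G_2 \in \II(T_2, B_2, E_2)$, the maps $P = G_1 T_1$ and $Q = T_2 G_2$ are projectors. By Proposition~\ref{prop:CompIINS}, $G_2 G_1$ is an inner inverse of $T_1 T_2$ if and only if $PQ$ is a projector. So it suffices to identify $\img P$, $\Ker P$, $\img Q$, $\Ker Q$ and then quote Theorem~\ref{thm:GTMainResult} applied to $P$ and $Q$ (note the order: here we need $PQ$, whereas in the outer case we needed $QP$, which is why the roles of the spaces will look transposed compared to Theorem~\ref{thm:G2G1OI}).

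Next I would compute these four subspaces. For the projector $P = G_1 T_1$: since $G_1$ is an inner inverse of $T_1$ chosen so that $B_1 = \img G_1 T_1$, we immediately get $\img P = B_1$. For the kernel, $P x = 0$ means $G_1 T_1 x = 0$; applying $T_1$ and using $T_1 G_1 T_1 = T_1$ gives $T_1 x = 0$, and conversely $\Ker T_1 \subseteq \Ker P$, so $\Ker P = \Ker T_1$. Dually, for $Q = T_2 G_2$: writing $G_2 G_1$ as an inner inverse of $T_1T_2$ corresponds (by interchanging the roles of $T$ and $G$) to $G_2$ being an outer inverse of... more directly, $Q = T_2 G_2$ is a projector with $\img Q = \img T_2 G_2 = \img T_2$ (because $G_2 T_2 G_2 = G_2$ is not what we want here; rather, since $E_2 = \Ker T_2 G_2$ and $G_2$ restricted to $\img T_2$ is a bijection onto $B_2$, one checks $\img T_2 G_2 = \img T_2$) and $\Ker Q = E_2$. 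I would verify these last two carefully from the construction of inner inverses given after \eqref{eq:DirSumDec}.

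Having established $\img P = B_1$, $\Ker P = \Ker T_1$, $\img Q = \img T_2$, $\Ker Q = E_2$, I would substitute into the five equivalent conditions of Theorem~\ref{thm:GTMainResult} for ``$PQ$ is a projector.'' Condition~\ref{PQProj2} reads $\img P \cap (\img Q + \Ker P) \leq \img Q \oplus (\Ker P \cap \Ker Q)$, which becomes $B_1 \cap (\img T_2 + \Ker T_1) \leq \img T_2 \oplus (\Ker T_1 \cap E_2)$ — precisely~\ref{G2G1GI2}. Similarly~\ref{PQProj4} gives~\ref{G2G1GI4}, \ref{PQProj3} gives~\ref{G2G1GI3}, and~\ref{PQProj5} gives~\ref{G2G1GI5}, while~\ref{PQProj1} is the statement ``$PQ$ is a projector,'' equivalent by Proposition~\ref{prop:CompIINS} to~\ref{G2G1GI}. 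This finishes the proof.

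The main obstacle I anticipate is nothing deep but rather bookkeeping: making sure that the correct product ($PQ$ versus $QP$) is used and that $\img Q = \img T_2$ and $\Ker Q = E_2$ (and the analogues for $P$) are genuinely the images and kernels produced by the $\II(\cdot,\cdot,\cdot)$ convention — in particular that the inner inverse's freedom on the complement $E_2$ does not affect $\img Q$. Once the dictionary between $(P,Q)$ and $(T_i, G_i, B_i, E_i)$ is pinned down, the theorem is an immediate translation of Theorem~\ref{thm:GTMainResult}, just as Theorem~\ref{thm:G2G1OI} was. I would keep the written proof to essentially three sentences: invoke Proposition~\ref{prop:CompIINS}, state the four subspace identities, and apply Theorem~\ref{thm:GTMainResult}.
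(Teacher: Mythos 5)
Your proposal is correct and is exactly the argument the paper intends (it omits the proof as ``analogous'' to Theorem~\ref{thm:G2G1OI}): reduce to ``$PQ$ is a projector'' via Proposition~\ref{prop:CompIINS} with $P=G_1T_1$, $Q=T_2G_2$, identify $\img P=B_1$, $\Ker P=\Ker T_1$, $\img Q=\img T_2$, $\Ker Q=E_2$, and substitute into Theorem~\ref{thm:GTMainResult}. The subspace identifications you flag as the only delicate point are indeed the right ones, and your verifications of them (e.g.\ $T_2G_2T_2=T_2$ forces $Q$ to act as the identity on $\img T_2$, so $\img Q=\img T_2$ regardless of the arbitrary choice of $G_2$ on $E_2$) are sound.
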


The question when the reverse order law holds for all inner inverses of $T_1$ and $T_2$ was answered for matrices
in \cite[Thm. 2.3]{Werner1994}, and an alternative proof was given in  \cite{Gross1997}. 
Using the previous characterizations, we give a short proof that generalizes the result to arbitrary vector spaces.

\begin{theorem}\label{thm:allinner}
 Let $T_1 \colon V \to W$ and $T_2 \colon U \to V$ be linear. Then $G_2G_1$ is an inner inverse of $T_1T_2$ for all
 inner inverses $G_1$ of $T_1$ and $G_2$ of $T_2$ if and only if $T_1T_2=0$ or $\Ker T_1 \leq \img T_2$.
\end{theorem}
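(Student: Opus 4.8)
The plan is to use Theorem~\ref{thm:G2G1GI} as the engine, quantifying over all admissible choices of the defining subspaces. Fix inner inverses $G_1 \in \II(T_1, B_1, E_1)$ and $G_2 \in \II(T_2, B_2, E_2)$; recall that $B_1$ ranges over all complements of $\Ker T_1$ in $V$ and $E_2$ ranges over all complements of $\img T_2$ in $V$, while $E_1$ and $B_2$ are irrelevant by the remark preceding Theorem~\ref{thm:G2G1GI}. Thus "the reverse order law holds for all inner inverses" is equivalent to "condition~\ref{G2G1GI4} of Theorem~\ref{thm:G2G1GI} holds for every complement $B_1$ of $\Ker T_1$ and every complement $E_2$ of $\img T_2$", namely
\[
 \img T_2 \leq B_1 \oplus (\Ker T_1 \cap \img T_2) \oplus (\Ker T_1 \cap E_2)
 \qquad \text{for all such } B_1, E_2.
\]

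First I would dispose of the easy direction. If $T_1T_2 = 0$ then $G_2G_1$ is trivially an inner inverse of $T_1T_2$ for any choice. If $\Ker T_1 \leq \img T_2$, then $\Ker T_1 \cap \img T_2 = \Ker T_1$, and one checks directly from the direct sum decompositions $V = \Ker T_1 \oplus B_1$ and $V = \img T_2 \oplus E_2$ that $\img T_2 \leq B_1 \oplus \Ker T_1$ already holds (every $v \in \img T_2 \subseteq V$ decomposes along $\Ker T_1 \oplus B_1$, and the $\Ker T_1$-component lies in $\Ker T_1 \cap \img T_2$ only if we are careful — actually the cleanest route is to invoke condition~\ref{G2G1GI5} instead, since $\Ker T_1 \leq \img T_2$ forces $E_2 \cap \img T_2 = 0$, hence $E_2 \cap (\img T_2 + \Ker T_1) \cap (\img T_2 + B_1) \leq E_2 \cap \img T_2 = 0 \leq \Ker T_1$). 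So condition~\ref{G2G1GI5} holds for every $E_2$ and $B_1$, giving the reverse order law for all inner inverses.

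For the converse, I assume $T_1 T_2 \neq 0$ and $\Ker T_1 \not\leq \img T_2$, and I must produce \emph{one} bad pair $(B_1, E_2)$ for which~\ref{G2G1GI4} fails. The hypothesis $\Ker T_1 \not\leq \img T_2$ gives a vector $k \in \Ker T_1 \setminus \img T_2$; extend a basis appropriately so that I can pick the complement $E_2$ of $\img T_2$ to contain $k$. The hypothesis $T_1 T_2 \neq 0$ gives a vector $u$ with $T_1 T_2 u \neq 0$, i.e. $v_0 := T_2 u \in \img T_2$ with $v_0 \notin \Ker T_1$. The idea is to choose the complement $B_1$ of $\Ker T_1$ so that it \emph{misses} $v_0$ in a controlled way — concretely, I want to arrange that $v_0 \equiv k \pmod{B_1}$, i.e. $v_0 - k \in B_1$, which is possible since $v_0 - k \notin \Ker T_1$ (as $v_0 \notin \Ker T_1$ and $k \in \Ker T_1$) so $v_0 - k$ can be taken as a basis vector of some complement $B_1$. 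Then $v_0 \in \img T_2$, but in the decomposition $V = B_1 \oplus \Ker T_1$ its $\Ker T_1$-component is $k$, and I claim $k \notin \Ker T_1 \cap \img T_2$ (true, since $k \notin \img T_2$) and $k \notin (\Ker T_1 \cap \img T_2) \oplus (\Ker T_1 \cap E_2)$ can be forced by also arranging $E_2 \cap (\img T_2 + \langle k \rangle)$ to be exactly $\langle k \rangle$ — which is automatic from $E_2 \cap \img T_2 = 0$ and $k \in E_2$. Hence $v_0 \notin B_1 \oplus (\Ker T_1 \cap \img T_2) \oplus (\Ker T_1 \cap E_2)$, so~\ref{G2G1GI4} fails and $G_2 G_1$ is not an inner inverse of $T_1 T_2$ for this choice.

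The main obstacle is the bookkeeping in the converse: I need to choose the two complements $B_1$ and $E_2$ simultaneously and compatibly (both are complements in the same space $V$, and I am imposing membership conditions $k \in E_2$, $v_0 - k \in B_1$, plus the independence needed for these to extend to genuine complements), and then verify cleanly that the sum $B_1 \oplus (\Ker T_1 \cap \img T_2) \oplus (\Ker T_1 \cap E_2)$ genuinely excludes $v_0$. The cleanest bookkeeping is probably to work with a basis of $V$ adapted to the (finite or infinite) lattice of subspaces $\Ker T_1$, $\img T_2$, and their intersection, choosing $B_1$ and $E_2$ as spans of carefully selected basis vectors together with the single extra vectors $v_0 - k$ and $k$; alternatively, one can phrase the whole converse using condition~\ref{G2G1GI5} and its orthogonal, which may make the "choose a bad complement" step more transparent. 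Either way the argument is short once the right vectors $k$ and $v_0$ are fixed.
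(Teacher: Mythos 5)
Your reduction to Theorem~\ref{thm:G2G1GI} and your proof of the ``if'' direction are fine; the route via condition~\ref{G2G1GI5} is clean, since $\Ker T_1\leq\img T_2$ gives $E_2\cap(\img T_2+\Ker T_1)=E_2\cap\img T_2=\{0\}$. This part matches the paper's strategy of quantifying over the admissible $B_1$ and $E_2$.

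The converse, however, contains a genuine error. You place $k\in\Ker T_1\setminus\img T_2$ inside $E_2$ and $v_0-k$ inside $B_1$, and then claim that $k\notin(\Ker T_1\cap\img T_2)\oplus(\Ker T_1\cap E_2)$. But $k\in\Ker T_1$ and $k\in E_2$ by construction, so $k\in\Ker T_1\cap E_2$, which is a summand of that very space; no further ``arranging'' can avoid this. Consequently $v_0=(v_0-k)+k$ lies in $B_1\oplus(\Ker T_1\cap E_2)$, hence in the right-hand side of condition~\ref{G2G1GI4}, and your witness fails. Worse, the construction can produce a pair for which the reverse order law actually \emph{holds}: take $V=F^2$, $\Ker T_1=\Ll(e_1)$, $\img T_2=\Ll(e_2)$, so that $k=e_1$, $v_0=e_2$, $E_2=\Ll(e_1)$, $B_1=\Ll(e_2-e_1)$; then $B_1\oplus(\Ker T_1\cap E_2)=V\supseteq\img T_2$ and condition~\ref{G2G1GI4} is satisfied.

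The missing idea is to use a \emph{single} vector for both complements. Writing $\Ker T_1=(\img T_2\cap\Ker T_1)\oplus V_1$ and $\img T_2=(\img T_2\cap\Ker T_1)\oplus V_2$, the hypotheses $\Ker T_1\not\leq\img T_2$ and $T_1T_2\neq0$ give nonzero $v_1\in V_1$ and $v_2\in V_2$, and $v=v_1+v_2$ lies in $\img T_2+\Ker T_1$ but in neither $\Ker T_1$ nor $\img T_2$. Hence $v$ can be placed simultaneously in a complement $B_1$ of $\Ker T_1$ and a complement $E_2$ of $\img T_2$. Then $v\in E_2\cap(\img T_2+\Ker T_1)\cap(\img T_2+B_1)$ while $v\notin\Ker T_1$, so condition~\ref{G2G1GI5} fails at once. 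This is exactly the paper's argument; note that condition~\ref{G2G1GI5} is the convenient one to refute here, whereas exhibiting the failure of~\ref{G2G1GI4} directly requires identifying which element of $\img T_2$ escapes the right-hand side, which is less transparent.
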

 
\begin{proof}
 If  $\Ker T_1 \leq \img T_2$ then $\Ker T_1 \cap \img T_2=\Ker T_1$ and~\ref{G2G1GI4} in the previous theorem is satisfied since $\Ker T_1+B_1=V$. The case $T_1T_2=0$ is trivial.

For the reverse implication, assume that  $\img T_2$ is not contained in $\Ker T_1$ and $\Ker T_1$ is not contained in $\img T_2$.  Choose $V_1, V_2 \leq V$  such that we have two direct sums
   $\Ker T_1 =(\img T_2 \cap \Ker T_1) \oplus V_1 $ and $ \img T_2 = (\img T_2 \cap \Ker T_1) \oplus V_2 $.
 Then we have  
    \begin{equation}
  \label{eq:DirImKer}
  \img T_2 + \Ker T_ 1= (\img T_2 \cap \Ker T_1) \oplus V_1 \oplus V_2.
 \end{equation}
By assumption, we can choose non-zero $v_1\in V_1$ and $v_2 \in V_2$. Let $v=v_1+v_2$. Then $v\in  \img T_2 + \Ker T_1$ and $v \not \in \Ker T_1$, $v \not \in \img T_2$.  Hence we can choose $B_1$ and $E_2$ such that $v\in B_1$ and $v \in E_2$ and $V= \Ker T_1 \oplus B_1=\img T_2 \oplus E_2$. 
 Then
 \[
 v\in E_2 \cap (\img T_2 + \Ker T_1) \cap (\img T_2 + B_1)
 \]
but $v\in \Ker T_1$. Hence \ref{G2G1GI5} in the previous theorem is not satisfied for inner inverses with $\img G_1 = B_1$ and  $\Ker G_2 = E_2$. 
\end{proof}

Werner \cite[Thm. 3.1]{Werner1992} proves that for matrices it is always possible to construct inner
inverses such that the reverse order law holds. Using the necessary and sufficient condition for outer inverses above, 
we extend this result to \agi s in arbitrary vector spaces. The special case of Moore-Penrose inverses is treated in \cite[Thm. 3.2]{ShinozakiSibuya1974}, 
and explicit solutions are constructed in \cite{ShinozakiSibuya1979, WibkerHoweGilbert1979}.

\begin{theorem} \label{prop:Construct}
 Let $T_1 \colon V \to W$ and $T_2 \colon U \to V$ be linear.  There always exist algebraic generalized inverses $G_1$ of $T_1$ and $G_2$ of $T_2$ such that $G_2 G_1$ is an algebraic generalized inverse of $T_1T_2$.
\end{theorem}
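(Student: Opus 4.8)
The plan is to construct suitable subspaces and then invoke Theorem~\ref{thm:G2G1OI} (or rather the \agi\ analogue of it). Recall that an \agi\ $G_i=\GI(T_i,B_i,E_i)$ requires direct sums $V=\Ker T_1\oplus B_1$, $W=\img T_1\oplus E_1$ for $G_1$, and $U=\Ker T_2\oplus B_2$, $V=\img T_2\oplus E_2$ for $G_2$, together with $\img G_i=B_i$, $\Ker G_i=E_i$. Since an \agi\ is in particular an outer inverse with $G_i=\OI(T_i,B_i,E_i)$, Theorem~\ref{thm:G2G1OI} applies once we have chosen the four defining spaces, and moreover, being also inner inverses, $G_2G_1$ will then automatically be an \ii\ (hence an \agi) of $T_1T_2$ as soon as it is an \oi, by Proposition~\ref{prop:CompIINS} applied in both directions --- or more directly: if $G_2G_1$ is an outer inverse of $T_1T_2$ and we additionally arrange $T_1T_2G_2G_1T_1T_2=T_1T_2$, we are done. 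So the real task is to make a single coherent choice of $B_1,E_1,B_2,E_2$ that simultaneously satisfies the direct-sum constraints and one of the equivalent conditions~\ref{G2G1OI2}--\ref{G2G1OI5}.

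First I would reduce the bookkeeping by recognizing that the projectors in play are $P=G_1T_1$ with $\img P=B_1$, $\Ker P=T_1^{-1}(E_1)$, and $Q=T_2G_2$ with $\img Q=T_2(B_2)$, $\Ker Q=E_2$; since for an \ii\ of $T_1$ we have $\img T_1\oplus E_1=W$, the space $T_1^{-1}(E_1)=\Ker T_1$, so $\Ker P=\Ker T_1$; and since $B_2$ is a complement of $\Ker T_2$, we have $T_2(B_2)=\img T_2$, so $\img Q=\img T_2$. Thus the free parameters genuinely affecting condition~\ref{G2G1OI4} are just $B_1$ (a complement of $\Ker T_1$ in $V$) and $E_2$ (a complement of $\img T_2$ in $V$), and the condition to satisfy becomes
\[
 B_1 \leq \img T_2 \oplus (E_2\cap B_1)\oplus (E_2\cap \Ker T_1).
\]
The plan is to build $B_1$ and $E_2$ in adapted coordinates: start from a decomposition $V = (\Ker T_1\cap\img T_2)\oplus V_1\oplus V_2\oplus V_3$, where $\Ker T_1=(\Ker T_1\cap\img T_2)\oplus V_1$, $\img T_2=(\Ker T_1\cap\img T_2)\oplus V_2$, and $V_3$ is a further complement so that $V=(\Ker T_1+\img T_2)\oplus V_3$. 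Then take $E_2:=V_1\oplus V_3$ (a complement of $\img T_2$) and take $B_1:=V_2\oplus V_3$ together with a complement of $\Ker T_1$ inside $V_2\oplus V_3$ --- more carefully, $B_1$ a complement of $\Ker T_1$ chosen to contain $V_2$ and $V_3$; one checks $V_2\oplus V_3$ is already a complement of $\Ker T_1$, so set $B_1:=V_2\oplus V_3$. With these choices $E_2\cap B_1 = V_3$ and $E_2\cap\Ker T_1=V_1$, and the right-hand side above becomes $\img T_2\oplus V_3\oplus V_1 \supseteq V_2\oplus V_3 = B_1$, since $V_2\leq\img T_2$. Hence~\ref{G2G1OI4} holds. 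Having fixed $B_1$ and $E_2$, I then pick $E_1$ to be any complement of $\img T_1$ in $W$ and $B_2$ any complement of $\Ker T_2$ in $U$; these do not affect the condition, and they make $G_1,G_2$ genuine \agi s.

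With the conditions of (the \oi\ version of) Theorem~\ref{thm:G2G1OI} verified, $G_2G_1$ is an outer inverse of $T_1T_2$. To upgrade to an \agi, I would apply Proposition~\ref{prop:CompIINS} a second time with the roles of the $T_i$ and $G_i$ interchanged: $G_2G_1$ is an \ii\ of $T_1T_2$ iff $PQ$ (rather than $QP$) is a projector, where now $P$, $Q$ denote the relevant projectors for the \ii\ reading; but with our symmetric-looking construction, showing $PQ$ is also idempotent amounts to the dual/transposed version of the same subspace inclusion, which holds by construction --- alternatively one invokes Theorem~\ref{thm:G2G1GI}\,\ref{G2G1GI4} with the same $B_1,E_2$, whose condition $\img T_2\leq B_1\oplus(\Ker T_1\cap\img T_2)\oplus(\Ker T_1\cap E_2)$ reads $\img T_2\leq (V_2\oplus V_3)\oplus(\Ker T_1\cap\img T_2)\oplus V_1$, which contains $(\Ker T_1\cap\img T_2)\oplus V_2=\img T_2$, so it holds as well. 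I expect the main obstacle to be purely organizational: keeping straight which four subspaces are free, which constraints each must satisfy, and verifying that one fixed choice of $B_1$ and $E_2$ simultaneously satisfies the \emph{outer}-inverse condition and the \emph{inner}-inverse condition --- this is where the explicit $V_1,V_2,V_3$ decomposition pays off, since both inclusions then reduce to the trivial containments $V_2\leq\img T_2$ and $\img T_2 = (\Ker T_1\cap\img T_2)\oplus V_2$.
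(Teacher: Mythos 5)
Your proof is correct and follows essentially the same route as the paper: the same four-fold decomposition $V=(\Ker T_1\cap\img T_2)\oplus V_1\oplus V_2\oplus V_3$, the same choices $B_1=V_2\oplus V_3$ and $E_2=V_1\oplus V_3$, and the same verification of Theorem~\ref{thm:G2G1OI}~\ref{G2G1OI4} and Theorem~\ref{thm:G2G1GI}~\ref{G2G1GI4}. (Only your passing remark that an outer inverse would ``automatically'' be an inner inverse is wrong --- the matrix example in Section~\ref{sec:Example} refutes it --- but you supersede it by checking the inner-inverse condition explicitly, so the argument stands.)
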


\begin{proof}
 Choose $V_1, V_2 \leq V$ as in the previous proof such that~\eqref{eq:DirImKer} holds.  Moreover, choose $V_3 \leq V$  such that
 \begin{equation*}
  V = (\img T_2 + \Ker T_1) \oplus V_3 = (\img T_2 \cap \Ker T_1) \oplus V_1 \oplus V_2 \oplus V_3. 
 \end{equation*}
 Then $B_1=V_2 \oplus V_3$ is a direct complement of $\Ker T_1$, and $E_2=V_1 \oplus V_3$ is a direct complement of $\img T_2$.
 Hence there exist respectively an \agi\ $G_1$ of $T_1$ with $\img G_1 = B_1$ and  $G_2$ of $T_2$ with
 $\Ker G_2 = E_2$. We verify that such $G_1$ and $G_2$ 
  satisfy Theorem~\ref{thm:G2G1OI}~\ref{G2G1OI4}, where $T_1^{-1}(E_1) =\Ker T_1$ and $ T_2(B_2)= \img T_2$ since $G_1$ and $G_2$ are \agi s:
 \begin{equation*}
  \img T_2 \oplus (E_2 \cap B_1) \geq \img T_2 \oplus V_3 = (\img T_2 \cap \Ker T_1) \oplus V_2 \oplus V_3 \geq B_1.
 \end{equation*}
 Similarly, we verify Theorem~\ref{thm:G2G1GI}~\ref{G2G1GI4}
 \begin{equation*}
  B_1  \oplus (\Ker T_1 \cap \img T_2) = V_2 \oplus V_3 \oplus (\Ker T_1 \cap \img T_2) \geq 
  V_2  \oplus (\Ker T_1 \cap \img T_2) = \img T_2. 
   \end{equation*}
  Hence $G_2G_1$ is an \agi\ of $T_1T_2$ for all $G_1=\GI(T_1,B_1,E_1)$ and $G_2=\GI(T_2,B_2,E_2)$, 
  independent of the choice of $E_1$ and $B_2$.
\end{proof}

\section{Representing the product of outer inverses}

In this section, we assume that for two linear maps $T_1 \colon V \to W$ and $T_2 \colon U \to V$ 
with outer inverses $G_1$ and $G_2$ the reverse order law holds. Our goal is to find a description of the product $G_2G_1$ that does not require the explicit knowledge of $G_1$ and $G_2$.  Using the representation via projectors, one immediately verifies that
\begin{equation*} 
 \OI(T_2, P_2, Q_2)\OI(T_1, P_1, Q_1)= \OI(T_1T_2, P_2 - G_2P_1T_2, T_1Q_2G_1)
\end{equation*}
but this expression involves both outer inverses $G_1$ and $G_2$. For the representation via defining spaces, we
compute the kernel and the image of the product.

\begin{lemma} \label{lem:KerImG2G1}
 Let $T_1 \colon V \to W$ and $T_2 \colon U \to V$ be linear with outer inverses $G_1 = \OI(T_1, B_1, E_1)$ and $G_2= \OI(T_2, B_2, E_2)$.
Then
\begin{equation*}
 \Ker G_2G_1 =  \es_1 \oplus T_1(B_1 \cap \es_2) \quad \text{and} \quad \img G_2G_1 = G_2 ((B_1 + E_2) \cap \img T_2).
\end{equation*}
\end{lemma}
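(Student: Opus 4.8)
The plan is to compute $\Ker G_2G_1$ directly and then obtain $\img G_2G_1$ by the duality principle \ref{DualPrin}, since kernel and image of a composition are dual notions. For the kernel, I would start from the general formula for the kernel of a composition in Proposition \ref{prop:KerComp}, applied with $T_1 := G_2$, $T_2 := G_1$ and an inner inverse of $G_1$. Here is the key point that makes this clean: an outer inverse $G_1$ of $T_1$ is, by Definition \ref{dfn:GI}, precisely an inner inverse of itself is not quite what we want — rather, $T_1$ is an inner inverse of $G_1$ (since $G_1 T_1 G_1 = G_1$). So Proposition \ref{prop:KerComp} with $G_2 \mapsto T_1$ as the chosen inner inverse of $G_1$ gives
\begin{equation*}
 \Ker G_2 G_1 = T_1(\Ker G_2 \cap \img G_1) \oplus \Ker G_1 = T_1(E_2 \cap B_1) \oplus E_1,
\end{equation*}
which is exactly the claimed expression for $\Ker G_2 G_1$ (up to the harmless reordering of the direct summands).

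For the image, I would invoke the duality principle \ref{DualPrin}: transposing reverses the order of composition and exchanges kernels with images (orthogonals), so the formula for $\img G_2 G_1$ should be the ``dual'' of the kernel formula with the roles of $G_1, G_2$ and $T_1, T_2$ suitably swapped. Concretely, $\img G_2 G_1 = \Ker (G_2 G_1)^{\perp\perp}$-type reasoning, or more directly: apply the kernel computation to the transposes $G_1^{*} G_2^{*}$ (note $G_2^{*}$ is an outer inverse of $T_2^{*}$ with $\img G_2^{*} = E_2^{\perp}$-flavored data), and dualize back. Alternatively — and this may be the cleaner route to write down — compute $\img G_2 G_1$ directly: since $G_1$ is an outer inverse, $\img G_1 = \img G_1 T_1 = B_1$ and $T_1 G_1 = Q_1$ is a projector, so $\img G_2 G_1 = G_2(\img G_1) = G_2(B_1)$; but we must intersect with the domain-relevant subspace for $G_2$, namely $G_2 = (T_2|_{B_2})^{-1} Q_2$ only ``sees'' $B_1$ through $Q_2$, i.e. through $\img T_2$ modulo $E_2$. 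Unwinding $G_2 = (T_2|_{B_2})^{-1}Q_2$ with $\img Q_2 = T_2(B_2)$, $\Ker Q_2 = E_2$, one gets $G_2(B_1) = G_2(Q_2(B_1)) = (T_2|_{B_2})^{-1}\big((B_1 + E_2)\cap T_2(B_2)\big)$, and since $(T_2|_{B_2})^{-1}$ followed by inclusion is just $G_2$ restricted appropriately, this collapses to $G_2((B_1 + E_2)\cap \img T_2)$ once one checks $T_2(B_2)$ can be replaced by $\img T_2$ using $\img G_2 = B_2$ together with the reverse-order-law hypothesis. This last replacement is where the assumption that the reverse order law holds actually gets used, and it is the step I expect to need the most care.

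I expect the main obstacle to be the image computation, specifically keeping straight which ambient subspace $G_2$ acts on and justifying that the reverse-order-law hypothesis lets one write $\img T_2$ in place of $T_2(B_2)$ inside the intersection $(B_1+E_2)\cap \img T_2$. The kernel half is essentially a one-line application of Proposition \ref{prop:KerComp}. If the direct-computation route for the image gets bogged down in bookkeeping, the fallback is the strictly formal one: state the transpose identities ($( T_1 T_2)^{*} = T_2^{*} T_1^{*}$, $(G_2 G_1)^{*} = G_1^{*} G_2^{*}$, $\img(\,\cdot\,)^{\perp} = \Ker(\,\cdot\,)^{*}$, outer inverses transpose to outer inverses with $B \leftrightarrow E^{\perp}$ patterns as recorded in Appendix \ref{sec:Duality}), apply the already-proved kernel formula to the transposed maps, and dualize — this turns the image formula into a corollary of the kernel formula with no new computation, at the cost of citing the duality dictionary carefully.
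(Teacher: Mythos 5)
Your kernel computation is exactly the paper's: apply Proposition~\ref{prop:KerComp} to the composition $G_2\circ G_1$, using that $T_1$ is an inner inverse of $G_1$ (since $G_1T_1G_1=G_1$), to get $\Ker G_2G_1=T_1(E_2\cap B_1)\oplus E_1$. That half is correct and complete.

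The image half has a genuine gap, and the hint you give for closing it points the wrong way. You correctly reduce to $\img G_2G_1=G_2(B_1)$ and to $G_2(B_1)=(T_2|_{B_2})^{-1}\bigl((B_1+E_2)\cap T_2(B_2)\bigr)$, but the remaining step --- that this equals $G_2\bigl((B_1+E_2)\cap\img T_2\bigr)$ --- is exactly what you leave as ``once one checks'', and you attribute it to the reverse-order-law hypothesis. It has nothing to do with the reverse order law: the identity is unconditional (the paper's proof never uses the section's standing assumption), and invoking that hypothesis would not supply the missing argument. The paper closes the gap in two lines: since $\Ker G_2=E_2\leq B_1+E_2$, the map $G_2$ distributes over the intersection, so $G_2\bigl((B_1+E_2)\cap\img T_2\bigr)=G_2(B_1+E_2)\cap G_2(\img T_2)=G_2(B_1)\cap\img G_2=G_2(B_1)$, using $G_2(\img T_2)=\img G_2T_2=\img G_2$ from Proposition~\ref{prop:OIChar}~\ref{oi:Nashed2}. (If you insist on your route, the missing ingredient is the unconditional decomposition $\img T_2=T_2(B_2)\oplus(E_2\cap\img T_2)$, which follows from $V=T_2(B_2)\oplus E_2$ and modularity~\eqref{eq:Modularity}, together with $G_2(E_2)=0$.) Your fallback via transposition is also not a repair: dualizing the kernel formula applied to $G_1^*G_2^*$ yields $(\img G_2G_1)^{\perp}=T_2^*(\B_1\cap E_2^{\perp})\oplus\B_2$, which is the implicit description of Theorem~\ref{thm:RevOrderLaw}, not the explicit expression $G_2\bigl((B_1+E_2)\cap\img T_2\bigr)$ asserted in the lemma; converting one into the other would require essentially the computation you are trying to avoid.
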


\begin{proof}
Recall that by definition $\Ker G_i = E_i$ and $\img G_i = B_i$. The first identity follows directly from Proposition \ref{prop:KerComp}.  For the second identity, we first note that for a linear map $G$ and subspaces $V_1,V_2$, we have 
 $G(V_1 \cap V_2) = G(V_1) \cap G(V_2)$ if $\Ker G \leq V_1$. Hence $ G_2 ((B_1 + E_2) \cap \img T_2)$ equals
 \[
   G_2((\img G_1 + \Ker G_2)  \cap \img T_2) = G_2(\img G_1) \cap G_2(\img T_2) = \img G_2G_1,
 \]
since $G_2(\img T_2) = \img G_2$ by Proposition~\ref{prop:OIChar}~\ref{oi:Nashed2}.
\end{proof}

Note that the expression for the image of the composition requires the explicit knowledge of $G_2$. 
In particular, the reverse order law takes the form 
\begin{equation*} 
  \OI(T_2, B_2, E_2)\OI(T_1, B_1, E_1) = \OI(T_1T_2, G_2((B_1 + E_2)\cap \img T_2), E_1 +T_1(B_1 \cap E_2)).
\end{equation*}
Werner \cite[Thm. 2.4]{Werner1992} gives a result in a similar spirit for inner inverses of matrices.

Using an implicit description of $\img G_i$, it is possible to state the reverse order law in a form that depends on the kernels and images of the respective outer inverses alone.
This approach is  motivated by our application to linear 
boundary problems (Section~\ref{sec:BP}), where it is natural to define solution spaces 
via the boundary conditions they satisfy.

In more detail, the Galois connection from Appendix \ref{sec:Duality} allows to represent 
a subspace $B$ implicitly via the orthogonally closed subspace $\B = B^{\perp}$ 
of the dual space.  
We will therefore use the notation $G=\OI(T, \B, E)$ 
for the outer inverse with $\img G = \Bo$ and $\Ker G = E$ as well as 
the analogue for inner inverses. 

\begin{theorem} \label{thm:RevOrderLaw}
 Let $T_1 \colon V \to W$ and $T_2 \colon U \to V$ be linear with \oi s $G_1= \OI(T_1, \B_1, E_1)$ and 
$ G_2 = \OI(T_2, \B_2, E_2)$.
 If $G_2G_1$ is an \oi\ of $T_1T_2$,  then
 \begin{equation} \label{eq:DefComp}
  \OI(T_2, \B_2, E_2)\OI(T_1, \B_1, E_1)= \OI(T_1T_2, \B_2 \oplus T_2^*(\B_1 \cap \es_2^{\perp}), \es_1 \oplus T_1(\Boi{1} \cap \es_2)),
 \end{equation}
 where $T_2^*$ denotes the transpose of $T_2$.
\end{theorem}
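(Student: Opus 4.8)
The plan is to combine Lemma~\ref{lem:KerImG2G1}, which already identifies $\Ker G_2G_1 = \es_1 \oplus T_1(\Boi{1} \cap \es_2)$, with a translation of the image formula $\img G_2G_1 = G_2((\Boi{1} + E_2) \cap \img T_2)$ into the implicit language of orthogonals. Since $G_1 = \OI(T_1,\B_1,E_1)$ means $\img G_1 = \Boi{1} = \B_1^\perp$, the kernel identity needs no further work: it is exactly the last argument of the right-hand $\OI$ in~\eqref{eq:DefComp}. So the whole content of the theorem is the claim that the orthogonal of $\img G_2G_1$ is $\B_2 \oplus T_2^*(\B_1 \cap \es_2^{\perp})$, i.e.\ that
\[
 \bigl(G_2((\Boi{1} + E_2) \cap \img T_2)\bigr)^{\perp} = \B_2 \oplus T_2^*(\B_1 \cap \es_2^{\perp}).
\]

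First I would rewrite $\img G_2G_1$ in a form where the outer map $G_2$ has been eliminated in favour of $T_2$. Using $\Ker G_2 = E_2 \leq \Boi{1} + E_2$ together with the fact (already used in the proof of Lemma~\ref{lem:KerImG2G1}) that $G$ commutes with intersection when its kernel lies in one of the factors, one has $G_2((\Boi{1}+E_2)\cap \img T_2) = G_2(\Boi{1}+E_2)$. Now I want to pull this back through $T_2$: since $G_2$ restricted to $\img T_2$ is the inverse of $T_2|_{B_2}$ and $T_2 G_2 = Q_2$ is the projector with image $T_2(B_2)$ and kernel $E_2$, the subspace $G_2(X)$ for $X \leq \img T_2 + E_2$ should be describable as $T_2^{-1}(X \cap \img T_2) \cap B_2$, or equivalently the image under $G_2$ is $\B$-dual to something built from $T_2^{-1}$ of the orthogonal. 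The cleanest route is to pass to orthogonals at the level of $T_2$: for any subspace $Y$ of the domain of $T_2$ one has the duality $T_2(Y)^{\perp} = (T_2^*)^{-1}(Y^\perp)$ and $(T_2^{-1}(Z))^{\perp} = T_2^*(Z^\perp)$ (up to orthogonal closure issues handled by the Galois connection in Appendix~\ref{sec:Duality}). Applying the second of these with $Z = (\Boi{1}+E_2)\cap \img T_2$ and using $\img G_2G_1 = G_2(Z) = (T_2|_{B_2})^{-1}(Z)$, one gets $(\img G_2G_1)^\perp = T_2^*(Z^\perp)$, and then $Z^\perp = (\Boi{1}+E_2)^\perp + (\img T_2)^\perp = (\B_1 \cap E_2^\perp) + (\img T_2)^\perp$. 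Here is where I expect the transpose-map identities to do the real work: I would cite or re-derive from Appendix~\ref{sec:Duality} that $(\img T_2)^\perp = \Ker T_2^*$, so $T_2^*((\img T_2)^\perp) = 0$, and that $(\Boi{1}+E_2)^\perp = \B_1 \cap \es_2^\perp$. This collapses $T_2^*(Z^\perp)$ to $T_2^*(\B_1 \cap \es_2^\perp)$.

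The remaining discrepancy is the extra summand $\B_2$ in~\eqref{eq:DefComp}: I claim $T_2^*(\B_1 \cap \es_2^\perp) = \B_2 \oplus T_2^*(\B_1 \cap \es_2^\perp)$ fails literally, so the summand must come from the difference between $G_2(Z)$ as a subspace of $U$ and its orthogonal's behaviour under $T_2^*$. The point is that $\img G_2 = B_2 = \B_2^\perp$, so any subspace of the form $G_2(\text{something})$ is contained in $B_2$ and hence its orthogonal automatically contains $B_2^\perp = \B_2$; the correct statement is $(\img G_2 G_1)^\perp = \B_2 + T_2^*(\B_1 \cap \es_2^\perp)$, and the sum is direct because $T_2^*$ maps into a complement: precisely, $T_2^*(\B_1 \cap \es_2^\perp) \leq \img T_2^* $ and, under the identification $\img G_2 = B_2$, the transpose $T_2^*$ is injective on $E_2^\perp$ with image complementary to $\B_2 = B_2^\perp$ (this is the dual of the statement that $T_2|_{B_2}$ is a bijection onto $T_2(B_2)$ and $U = \Ker T_2 \oplus B_2$ when $G_2$ is... — more carefully, I would invoke the $\OI$-transpose correspondence, namely that $G_2^* = \OI(T_2^*, E_2^\perp, B_2^\perp)$, which immediately gives $\img G_2^* = E_2^{\perp\perp} \supseteq$ and $\Ker(G_2 G_1)^\perp$ interpretation). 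Thus the directness of $\B_2 \oplus T_2^*(\B_1 \cap \es_2^\perp)$ reduces to $\B_2 \cap \img T_2^* $ sitting correctly, which follows from $\Ker T_2^* = (\img T_2)^\perp$ and a dimension/complement count inherited from $W = E_2 \oplus T_2(B_2)$ dualized.

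The main obstacle I anticipate is bookkeeping the orthogonal-closure subtleties: in an arbitrary (possibly infinite-dimensional) vector space, $(X+Y)^\perp = X^\perp \cap Y^\perp$ always holds but $(X\cap Y)^\perp = \overline{X^\perp + Y^\perp}$ only after orthogonal closure, and $T_2^*((T_2^{-1}(Z))^\perp)$ vs.\ $\overline{T_2^*(\cdots)}$ must be tracked. The theorem is stated for the orthogonally closed subspaces $\B_i$, so all the subspaces appearing as first arguments of $\OI$ are automatically closed; I would verify that $\B_2 \oplus T_2^*(\B_1 \cap \es_2^\perp)$ is itself orthogonally closed (using that $\B_2$ is closed and that adding a finite-codimensional or suitably-behaved piece preserves closure, or more robustly that it equals an explicit annihilator $(\img G_2G_1)^\perp$, which is closed by construction of the Galois connection in Appendix~\ref{sec:Duality}). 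Once the identity $(\img G_2G_1)^\perp = \B_2 \oplus T_2^*(\B_1 \cap \es_2^\perp)$ is established and the kernel is read off from Lemma~\ref{lem:KerImG2G1}, the conclusion~\eqref{eq:DefComp} is immediate from the definition of the $\OI(\,\cdot\,,\B,E)$ notation, provided $G_2G_1$ is indeed an outer inverse of $T_1T_2$ — which is the standing hypothesis — so that writing it as $\OI(T_1T_2,\cdot,\cdot)$ is legitimate.
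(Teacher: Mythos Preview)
Your handling of the kernel is fine and matches the paper. The image computation, however, takes a detour that contains a genuine error and misses the clean argument.

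The step ``$\img G_2G_1 = G_2(Z) = (T_2|_{B_2})^{-1}(Z)$, one gets $(\img G_2G_1)^\perp = T_2^*(Z^\perp)$'' is not justified: the identity $(T_2^{-1}(Z))^\perp = T_2^*(Z^\perp)$ from Proposition~\ref{prop:Duals} applies to the \emph{full} preimage $T_2^{-1}(Z) \leq U$, not to $(T_2|_{B_2})^{-1}(Z)$, which is the restriction-inverse landing in $B_2$. These differ precisely by intersection with $B_2$, which is exactly why you immediately find the summand $\B_2$ missing. Your subsequent patch --- observing that $\img G_2G_1 \leq B_2$ forces $(\img G_2G_1)^\perp \supseteq \B_2$ --- gives only one inclusion; the reverse inclusion and the directness are then argued only in outline, with the decisive step deferred to an ``$\OI$-transpose correspondence'' that you state but do not actually use.

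The paper's proof is a one-liner once you see the right dualization. Rather than trying to push $T_2^*$ through $G_2$, observe that $(\img G_2G_1)^\perp = \Ker(G_2G_1)^* = \Ker G_1^*G_2^*$ by Proposition~\ref{prop:Duals}, and now apply Proposition~\ref{prop:KerComp} \emph{a second time, to the composition $G_1^*G_2^*$}. Since $G_2$ is an outer inverse of $T_2$, $T_2$ is an inner inverse of $G_2$, hence $T_2^*$ is an inner inverse of $G_2^*$; the formula gives
\[
\Ker G_1^*G_2^* \;=\; T_2^*(\Ker G_1^* \cap \img G_2^*) \oplus \Ker G_2^* \;=\; T_2^*(\B_1 \cap E_2^\perp) \oplus \B_2,
\]
using $\Ker G_i^* = (\img G_i)^\perp = \B_i$ and $\img G_2^* = (\Ker G_2)^\perp = E_2^\perp$. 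This produces both the summand $\B_2$ and the directness for free, and sidesteps all the orthogonal-closure bookkeeping you anticipated. Your parenthetical remark about $G_2^* = \OI(T_2^*, E_2^\perp, \B_2)$ is essentially this idea; you should follow it through directly instead of routing back through $T_2$.
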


\begin{proof}
From Lemma \ref{lem:KerImG2G1} we 
already know that
  $ \Ker G_2G_1 
= \es_1 \oplus T_1(\Boi{1} \cap \es_2)$.
 From Proposition \ref{prop:Duals} and \ref{prop:KerComp} we get 
\begin{multline*} 
 (\img G_2G_1)^{\perp}
=  \Ker G_1^*G_2^*
= T_2^*(\Ker G_1^* \cap \img G_2^*) \oplus \Ker G_2^*\\
= T_2^*((\img G_1)^{\perp} \cap (\Ker G_2)^{\perp} ) \oplus (\img G_2)^{\perp}
= T_2^*(\B_1 \cap E_2^{\perp}) \oplus \B_2,
\end{multline*}
and thus \eqref{eq:DefComp} holds.
\end{proof}

A computational advantage of this representation is that one can determine $G_2G_1$ directly by  computing
only one outer inverse instead of computing both $G_1$ and $G_2$; see the next section for an example. 

\section{Examples for matrices} \label{sec:Example}

In this section, we illustrate our results for finite-dimensional vector spaces. 
In particular, we show how to compute directly the composition of two generalized inverses using the reverse order 
law in the form~\eqref{eq:DefComp}.

Consider the following linear maps $T_1 \colon \Q^4 \to \Q^3$ and $T_2 \colon \Q^3 \to \Q^4$ given by
\begin{equation*}
 T_1 = 
\begin{pmatrix}
 1 & -1 & -1 & 1\\
 0 &  2 &  2 & -2\\
 3 &  1 &  1 & -1\\
\end{pmatrix}
\quad \text{and} \quad
T_2 = 
\begin{pmatrix}
 1 & -2 & -1 \\
 1 &  1 &  2 \\
-1 &  5 &  4 \\
-1 &  5 &  4 \\
\end{pmatrix}.
\end{equation*}
  We first use Theorem \ref{thm:G2G1OI} and \ref{thm:G2G1GI} to check 
whether for  \agi s $G_1 = \GI(T_1, B_1, E_1)$ and $G_2=\GI(T_2, B_2, E_2)$ the composition 
$G_2G_1$ is an \agi\ of $T_1T_2$.

For testing the conditions, we only need to fix 
$B_1 = \img G_1$ 
and $E_2 = \Ker G_2$,
such that $B_1 \oplus \Ker T_1 = \Q^4 = E_2 \oplus \img T_2$. 
We have
\begin{equation*}
 \Ker T_1 = \Ll((0,1,0,1)^T, (0,0,1,1)^T),\quad  \img T_2 = \Ll( (1,0,-2,-2)^T, (0,1,1,1)^T),
\end{equation*}
so we may choose for example
\begin{equation*}
  B_1 = \Ll((1,0,0,0)^T, (0,1,0,0)^T), \quad
E_2 = \Ll((1,0,0,0)^T, (0,0,1,0)^T).
\end{equation*}
For \agi s,  we obtain as a necessary and sufficient condition for being an outer inverse
\begin{equation*}
 B_1 \leq \img T_2  \oplus (E_2 \cap B_1) \oplus (E_2 \cap \Ker T_1)
\end{equation*}
from Theorem \ref{thm:G2G1OI} \ref{G2G1OI4}.

Since
$E_2 \cap \Ker T_1= \{0\}$ and
 $E_2 \cap B_1 = \Ll((1,0,0,0)^T)$, the right hand side yields that
$\Ll ( (1,0,0,0)^T, (0,1,0,0)^T, (0,0,1,1)^T ) \geq B_1.$
Thus for all \agi s $G_1$ and $G_2$ with $\img G_1= B_1$ and $\Ker G_2 = E_2$, the product $G_2G_1$ is an outer inverse of
$T_1T_2$. 

The corresponding condition for inner inverses by Theorem \ref{thm:G2G1GI} \ref{G2G1GI4} is
\begin{equation*}
\img T_2 \leq B_1 \oplus (\Ker T_1 \cap \img T_2) \oplus (\Ker T_1 \cap E_2). 
\end{equation*}
Since $\Ker T_1 \cap \img T_2 = \{0\}$, the right hand side yields $B_1$, which does not contain $\img T_2$.
Hence for the above choices of $G_1$ and $G_2$, the product $G_2G_1$ is never an inner inverse of $T_1T_2$.

Since $G_2G_1$ is an outer inverse, Theorem \ref{thm:RevOrderLaw} allows to determine $G_2G_1$ directly 
without knowing the factors. 
Identifying the dual space with row vectors, the orthogonals of $B_1$ and $E_2$ are given by 
\begin{equation*}
 B_1^{\perp} = \B_1 = \Ll((0,0,1,0), (0,0,0,1)), \quad E_2^{\perp}= \Ll((0,1,0,0), (0,0,0,1)),
\end{equation*}
so we have $\Bo_1 \cap E_2 =  \Ll((1,0,0,0)^T)$ and $\B_1 \cap E_2^{\perp} =\Ll((0,0,0,1))$. For explicitly
computing $G_2G_1$, we also have to choose $B_2 = \img G_2$ and $E_1 = \Ker G_1$.
Since we have
\begin{equation*}
 \img T_1 = \Ll((1,0,3)^T), (0,1,2)^T), \quad 
 \Ker T_2 =\Ll((1,1,-1)^T),
\end{equation*}
we may choose the complements $E_1 = \Ker G_1$ and $B_2= \img G_2$ as 
\begin{equation*}
 E_1 = \Ll((0,0,1)^T) \quad \text{and} \quad 
 B_2 = \Ll((1,0,0)^T, (0,1,0)^T). 
\end{equation*}
Using \eqref{eq:DefComp}, we can determine the kernel
\begin{equation*}
 E = \Ker G_2G_1 = E_1 \oplus T_1(\Bo_1 \cap E_2) = \Ll((1,0,0)^T, (0,0,1)^T).
\end{equation*}
The image of $G_2G_1$ is by \eqref{eq:DefComp} given via the orthogonal 
\begin{equation*}
 (\img G_2G_1)^{\perp} = \B_2 \oplus T_2^*(\B_1 \cap E_2^{\perp})
= \Ll((0,0,1), (-1, 5, 4)),
\end{equation*}
which means that $B = \img G_2G_1 = \Ll((5,1,0)^T)$. Therefore we can directly determine $G$ as the unique
outer inverse 
\begin{equation*}
 G = \OI(T_1T_2, B, E) = \begin{pmatrix}
      0 & \frac{5}{12} &0 \\ 0 &  \frac{1}{12} &0 \\0&0&0
     \end{pmatrix}.
\end{equation*}
One easily checks that $G$ is an outer inverse of $T$. 

\section{Fredholm operators} \label{sec:Fredholm}

We now turn to algorithmic aspects of the previous results. As already emphasized, for arbitrary vector spaces we
can express conditions for the reverse order law in terms of the defining spaces alone.
Nevertheless, in general it will not be possible to compute sums and intersections of infinite-dimensional subspaces.
For algorithmically checking the conditions of Theorem \ref{thm:G2G1OI} or \ref{thm:G2G1GI} and for computing the reverse
order law in the form \eqref{eq:DefComp}, we 
consider finite (co)dimensional spaces and Fredholm operators.

Recall that a linear map $T$ between vector spaces is called  \emph{Fredholm} operator if $\dim \Ker T < \infty$ and
 $\codim \img T < \infty$. Moreover, for finite codimensional subspaces $V_1\leq V$, we have  $\codim V_1=\dim V_1^{\perp}$. 
 In this case, $V_1$ can be implicitly represented by the finite-dimensional subspace
$V_1^{\perp} \leq V^*$. For an application to linear ordinary boundary problems, see the next section.
 
We assume that for finite-dimensional subspaces, we can compute sums and intersections and check inclusions, 
both in vector spaces and in their duals. With the following lemma, the intersection of a finite-dimensional subspace with a finite codimensional subspace is reduced to computing kernels of matrices. 

\begin{definition}
Let $u=(u_1, \ldots, u_m)^T \in V^m$ and $\beta = (\beta_1, \ldots, \beta_n)^T \in (V^*)^n$.
We call
\begin{equation*}
  \beta(u) = \begin{pmatrix}
              \beta_1(u_1) & \ldots & \beta_1(u_m)\\
               \vdots      & \ddots & \vdots \\
	      \beta_n(u_1) & \ldots & \beta_n(u_m)
             \end{pmatrix} \in F^{n \times m}
 \end{equation*}
 the \emph{evaluation matrix} of $\beta$ and $u$. 
\end{definition}

\begin{lemma} \label{lem:intersections}
 Let $U \leq V$ and $\B \leq V^*$ be generated respectively by $u=(u_1, \ldots, u_m)$ and $\beta=(\beta_1, \ldots, \beta_n)$.
 Let $k^1, \ldots, k^r \in F^m$ be a basis of $\Ker \beta(u)$, and $\kappa^1, \ldots, \kappa^s \in F^n$ a basis 
 of $\Ker (\beta(u))^T$.   Then 
 \begin{enumerate}[label=(\roman*)]
\item $U\cap \Bo$ is generated by
   $\sum_{i=1}^m k^1_i u_i , \ldots, \sum_{i=1}^m k^r_i u_i$ and
 \item $U^{\perp} \cap \B $ is generated by
  $\sum_{i=1}^n \kappa^1_i\beta_i , \ldots, \sum_{i=1}^n \kappa^s_i \beta_i$.
 \end{enumerate}
\end{lemma}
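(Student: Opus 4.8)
The plan is to reduce both intersections to kernel computations for the evaluation matrix $\beta(u)$ and its transpose, by expressing elements in coordinates with respect to the given generators. First I would record two elementary observations: since $\B$ is generated by $\beta_1,\dots,\beta_n$, its orthogonal is $\Bo=\bigcap_{i=1}^n\Ker\beta_i$; and since $U$ is generated by $u_1,\dots,u_m$, a functional lies in $U^{\perp}$ precisely when it annihilates each $u_j$.

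For part (i), a general element of $U$ has the form $x=\sum_{i=1}^m c_i u_i$ with $c=(c_1,\dots,c_m)^T\in F^m$, and $x\in\Bo$ holds if and only if $\beta_\ell(x)=\sum_{i=1}^m\beta_\ell(u_i)\,c_i=0$ for every $\ell$, that is, if and only if $\beta(u)\,c=0$. Hence $U\cap\Bo$ is the image of $\Ker\beta(u)$ under the coordinate map $F^m\to U$, $c\mapsto\sum_i c_i u_i$. Since a linear surjection carries a generating set to a generating set and $k^1,\dots,k^r$ generate $\Ker\beta(u)$, the vectors $\sum_{i=1}^m k^j_i u_i$ for $j=1,\dots,r$ generate $U\cap\Bo$.

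Part (ii) is the dual computation. A general element of $\B$ is $\varphi=\sum_{\ell=1}^n d_\ell\beta_\ell$ with $d\in F^n$, and $\varphi\in U^{\perp}$ if and only if $\varphi(u_j)=\sum_{\ell=1}^n d_\ell\,\beta_\ell(u_j)=0$ for every $j$, which is exactly $(\beta(u))^T d=0$. Applying the same image-of-a-generating-set argument to the coordinate map $F^n\to\B$, $d\mapsto\sum_\ell d_\ell\beta_\ell$, and to the basis $\kappa^1,\dots,\kappa^s$ of $\Ker(\beta(u))^T$ produces the claimed generators $\sum_{\ell=1}^n\kappa^j_\ell\beta_\ell$ of $U^{\perp}\cap\B$.

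I do not expect a genuine obstacle. The only point that needs a moment of care is that the tuples $u$ and $\beta$ need not consist of linearly independent vectors, so the coordinate maps $F^m\to U$ and $F^n\to\B$ may fail to be injective; this is harmless, because the statement only claims a generating set (not a basis) for each intersection, and only surjectivity of these maps enters the argument.
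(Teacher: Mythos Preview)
Your proof is correct and follows essentially the same approach as the paper: express an arbitrary element of $U$ (respectively $\B$) in coordinates with respect to the given generators, observe that membership in $\Bo$ (respectively $U^{\perp}$) is equivalent to the coefficient vector lying in $\Ker\beta(u)$ (respectively $\Ker(\beta(u))^T$), and conclude. Your version is slightly more explicit about the coordinate map and the fact that only a generating set, not a basis, is being produced, but there is no substantive difference in method.
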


\begin{proof}
 A linear combination $v=\sum_{\ell = 1}^m c_{\ell}u_{\ell}$ is in $\Bo$ if and only if 
 $\beta_i(v)= 0$ for $1 \leq i \leq n$,  that is,  $\sum_{\ell = 1}^m c_{\ell} \beta_i(u_{\ell}) = 0$ for  $1 \leq i \leq n$. Hence $\beta(u) \cdot (c_1, \ldots, c_m)^T = 0$. 
Analogously, one sees that the coefficients of linear combination in $U^{\perp} \cap \B$ are in the kernel of $(\beta(u))^T$.
\end{proof}

We reformulate the conditions of Theorem \ref{thm:G2G1OI} such that
for Fredholm operators they only involve 
operations on finite-dimensional subspaces and intersections
like in the previous lemma. Similarly, one can rewrite the conditions of Theorem \ref{thm:G2G1GI}.

\begin{corollary} \label{cor:OIF}
 Let $T_1 \colon V \to W$ and $T_2 \colon U \to V$ be linear with \oi s $G_1=\OI(T_1, \B_1, E_1)$ and $G_2=\OI(T_2, \B_2, E_2)$. Let $\C_2=T_2(\Bo_2)^\perp$ and $K_1=T_1^{-1}(E_1)$.  
The following conditions are equivalent:
\begin{enumerate} [label=(\roman*)]
 \item $G_2G_1$ is an outer inverse of $T_1T_2$. 	\label{algOI1}	
 \item $\C_2 + (\B_1 \cap E_2^{\perp}) \geq \B_1 \cap (E_2 \cap K_1)^{\perp}$ \label{algOI2}	
 \item $\B_1 \geq \C_2 \cap (E_2 \cap \Bo_1)^{\perp} \cap (E_2 \cap K_1)^{\perp}$ \label{algOI3}	
 \item $K_1  \oplus (E_2 \cap \Bo_1) \geq E_2 \cap (\B_1 \cap \C_2)^{\perp}$ \label{algOI4}	
 \item $E_2 \geq K_1 \cap (\B_1 \cap E_2^{\perp})^{\perp} \cap (\B_1 \cap \C_2)^{\perp}$ \label{algOI5}	
\end{enumerate}
\end{corollary}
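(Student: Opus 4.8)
\textbf{Proof proposal for Corollary~\ref{cor:OIF}.}

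The plan is to derive each equivalence from the corresponding condition in Theorem~\ref{thm:G2G1OI} by systematically translating every occurrence of a subspace of $V$ into an implicit description via its orthogonal in $V^*$, using the Galois connection of Appendix~\ref{sec:Duality}. The key dictionary entries are: for finite-codimensional $V_1, V_2 \leq V$ one has $(V_1 \cap V_2)^\perp = V_1^\perp + V_2^\perp$ and $(V_1 + V_2)^\perp = V_1^\perp \cap V_2^\perp$, together with $(V_1^\perp)^\perp = V_1$ when $V_1$ is orthogonally closed — in particular when it has finite codimension, which holds here since $T_1$ and $T_2$ are Fredholm. Note that $\B_1 = B_1^\perp$ and $\C_2 = T_2(\Bo_2)^\perp$ are by definition the orthogonals of $\img G_1$ and $\img Q = T_2G_2 = \img T_2$ on $B_2$ (this requires $T_2(B_2) = \img T_2$, which I would justify from Proposition~\ref{prop:OIChar}\ref{oi:Nashed2} since $G_2 = \OI(T_2,\B_2,E_2)$ has $\img G_2 = \Bo_2$ and $T_2(\img G_2) = \img TG_2T = \img G_2 \dots$ — more carefully, $\img T_2G_2 = \img G_2$ is an outer-inverse identity, and $T_2(B_2) = \img T_2 G_2$ when $B_2 = \img G_2$). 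Likewise $K_1 = T_1^{-1}(E_1) = \Ker P$.

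First I would handle item~\ref{algOI3}, which comes from Theorem~\ref{thm:G2G1OI}\ref{G2G1OI4}, namely
\[
  B_1 \leq T_2(B_2) \oplus (E_2 \cap B_1) \oplus (E_2 \cap T_1^{-1}(E_1)).
\]
Rewriting $B_1 = \Bo_1$, $T_2(B_2) = \C_2^\perp$, $T_1^{-1}(E_1) = K_1$, and $B_1 = \Bo_1$ inside the second summand, the inclusion $\Bo_1 \leq \C_2^\perp \oplus (E_2 \cap \Bo_1) \oplus (E_2 \cap K_1)$ passes to orthogonals (reversing the inclusion) as
\[
  \B_1 \geq \C_2 \cap (E_2 \cap \Bo_1)^\perp \cap (E_2 \cap K_1)^\perp,
\]
which is~\ref{algOI3}; here I use that taking $\perp$ of a direct sum gives the intersection of the $\perp$'s, and that $\perp$ reverses inclusions, plus $(\Bo_1)^\perp = \B_1$ since $\B_1$ is orthogonally closed (being a finite-dimensional subspace of $V^*$). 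The remaining items~\ref{algOI2}, \ref{algOI4}, \ref{algOI5} follow in exactly the same way from Theorem~\ref{thm:G2G1OI}\ref{G2G1OI2}, \ref{G2G1OI3}, \ref{G2G1OI5} respectively: each is an inclusion between subspaces of $V$ built from $B_1$, $E_2$, $T_2(B_2)$, $T_1^{-1}(E_1)$ using $\cap$, $+$, $\oplus$, and one applies $\perp$ throughout. For instance \ref{G2G1OI3}, which reads $T_1^{-1}(E_1) \oplus (E_2 \cap B_1) \geq E_2 \cap (B_1 + T_2(B_2))$, becomes after substitution $K_1 \oplus (E_2 \cap \Bo_1) \geq E_2 \cap (\Bo_1 + \C_2^\perp) = E_2 \cap (\B_1 \cap \C_2)^\perp$, which is~\ref{algOI4}.

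The main obstacle, and the only genuinely non-formal point, is making sure the orthogonality manipulations are valid — i.e.\ that every subspace on which we invoke $(V_1^\perp)^\perp = V_1$ is orthogonally closed, and that the identity $(V_1 + V_2)^\perp = V_1^\perp \cap V_2^\perp$ and its converse $(V_1 \cap V_2)^\perp = V_1^\perp + V_2^\perp$ both apply. For arbitrary subspaces only $(V_1 + V_2)^\perp = V_1^\perp \cap V_2^\perp$ holds unconditionally, while the dual identity and double-orthogonal-closure require finite codimension. This is exactly where the Fredholm hypothesis enters: $\Ker T_1$, $\Ker T_2$ are finite-dimensional and $\img T_1$, $\img T_2$ finite-codimensional, so $E_1$, $E_2$, $B_1$, $B_2$, $T_2(B_2) = \img T_2$, $T_1^{-1}(E_1)$ and all the finite intersections and sums formed from them are finite-codimensional, hence orthogonally closed, and $\B_1$, $\B_2$, $\C_2$ are finite-dimensional. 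I would state these closure facts once, citing the relevant lemma from Appendix~\ref{sec:Duality}, and then the five equivalences fall out by the uniform substitution described above; the assertion that the resulting conditions are the ones amenable to Lemma~\ref{lem:intersections} is then immediate, since every term is either a finite-dimensional subspace of $V^*$ or the orthogonal of an intersection of such, i.e.\ a finite sum in $V^*$.
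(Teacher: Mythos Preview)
Your overall approach matches the paper's: each condition of the Corollary is obtained from the corresponding item of Theorem~\ref{thm:G2G1OI} via Proposition~\ref{prop:SumCap}, by taking orthogonals of both sides for~\ref{algOI2} and~\ref{algOI3}, and by merely rewriting $B_1 + E_2 = (\B_1 \cap E_2^{\perp})^{\perp}$ and $B_1 + T_2(B_2) = (\B_1 \cap \C_2)^{\perp}$ in~\ref{algOI4} and~\ref{algOI5}.

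However, your account of where the Fredholm hypothesis enters is mistaken. In the algebraic setting of Appendix~\ref{sec:Duality} (full dual $V^*$ with the canonical pairing), \emph{every} subspace of $V$ is orthogonally closed, and Proposition~\ref{prop:SumCap} gives $(V_1 \cap V_2)^{\perp} = V_1^{\perp} + V_2^{\perp}$ for arbitrary $V_1, V_2 \leq V$ with no codimension restriction; you may be thinking of the topological (Hilbert or Banach) setting, where closure issues do arise. On the dual side, $\B_1$, $\B_2$, $\C_2$, and $E_2^{\perp}$ are all orthogonally closed by construction, being orthogonals of subspaces of $V$, so $(\B_1 \cap \C_2)^{\perp} = \B_1^{\perp} + \C_2^{\perp}$ etc.\ also hold unconditionally. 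Hence the Corollary is a pure reformulation of Theorem~\ref{thm:G2G1OI}, valid for arbitrary linear maps; the Fredholm hypothesis is relevant only for the \emph{algorithmic} observation that the subspaces in~\ref{algOI2}--\ref{algOI5} are then finite-dimensional and computable via Lemma~\ref{lem:intersections}.

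A smaller point: your aside that $T_2(B_2) = \img T_2$ is wrong for general outer inverses (it holds only when $G_2$ is additionally an inner inverse, i.e.\ $B_2 \oplus \Ker T_2 = U$). Fortunately you never actually use it; what you need and correctly apply is $T_2(B_2) = \C_2^{\perp}$, which follows directly from the definition $\C_2 = T_2(\B_2^{\perp})^{\perp}$ together with the fact that every subspace of $V$ equals its double orthogonal.
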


\begin{proof} 
 Taking the orthogonal of both sides of \ref{thm:G2G1OI} \ref{G2G1OI2}, \ref{G2G1OI4} respectively
and applying Proposition \ref{prop:SumCap} we get \ref{algOI2} and \ref{algOI3}.
 For \ref{algOI4} and \ref{algOI5}, we can  apply Proposition \ref{prop:SumCap} directly to the corresponding 
conditions of Theorem \ref{thm:G2G1OI}.
\end{proof}

We note that using Lemma \ref{lem:intersections}, it also possible to 
determine constructively the implicit representation \eqref{eq:DefComp} of a 
product of generalized inverses; see the next section.

\section{Examples for linear ordinary boundary problems} \label{sec:BP}

As an example involving infinite dimensional spaces and Fredholm operators, we consider solution (Green's) operators for linear ordinary boundary problems.
Algebraically, linear boundary problems can be represented as a pair $(T, \B)$, where $T \colon V \to W$ is a surjective linear map, 
and $\B \leq V^*$ is an orthogonally closed subspace of (homogeneous) boundary conditions. We say that $v\in V$ is a solution of $(T, \B)$ for a given $w\in W$ if $Tv=w$ and $v\in \Bo$. 

For a regular boundary problem (having a unique solution for every right-hand side), the Green's
operator is defined as the unique right inverse $G$ of $T$ with $\img G = \Bo$; see \cite{RegensburgerRosenkranz2009}
for further details.
The product $G_2G_1$ of the Green's operators  of two boundary problems $(T_1, \B_1)$ and $(T_2, \B_2)$
is then the Green's operator of the regular boundary problem $(T_1T_2, \B_2 \oplus T_2^*(\B_1))$, see also Theorem~\ref{thm:RevOrderLaw}.

For boundary problems having at most one solution, that is $\Bo \cap \Ker T = \{0\}$, the linear algebraic setting has been extended in~\cite{Korporal2012} by defining generalized Green's operators as outer inverses. More specifically, one first has to project an arbitrary right-hand side $w\in W$ onto $T(\Bo)$, the image of the ``functions'' satisfying the boundary conditions, along a complement $E$ of $T(\Bo)$. 
The corresponding generalized Green's operator is defined as  the outer inverse $G=\OI(T, \B, E)$, and we refer to  $E \leq W$ as an \emph{exceptional space} for the boundary problem $(T,\B)$. 

The question when the product of two outer inverses is again an outer inverse, is the basis for factoring boundary problems
into lower order problems; see~\cite{RegensburgerRosenkranz2009,RosenkranzRegensburger2008a} for the case of regular boundary problems. 
This, in turn, provides a method to factor certain integral operators.

As an example, let us consider the boundary problem
\begin{equation} \label{eq:StandardEx}
 \bvp{u''=f}{u'(0)=u'(1)=u(1)=0.}
\end{equation}
In the above setting, this means we consider the pair $(T_1,\B_1)$ with $T_1=\D^2$ and  $\B_1 = \Ll(\E_0\D, \E_1\D,\E_1)$, 
where $\D$ denotes the usual derivation on smooth functions and $\E_c$ the evaluation at $c \in \R$. 
The boundary problem is only solvable for \emph{forcing functions} $f$ satisfying the \emph{compatibility condition} 
$\smallint_0^1 f(\xi) \dx\xi  =0$; more abstractly, we have $T_1(\Bo_1)=\C_1^\perp$ with 
$\C_1=\Ll(\smallint_0^1)$, where $\smallint_0^1$ denotes the functional $f\mapsto \smallint_0^1 f(\xi)\dx\xi $.  
For computing a generalized Green's operator of $(T_1, \B_1, E_1)$, we have to project $f$ onto $\C_1^\perp$  
along a fixed complement $E_1$. In~\cite{KorporalRegensburgerRosenkranz2011}, we   
computed the generalized Green's operator
\begin{equation*}
 G_1(f) =  x \smallint_0^x f(\xi) \dx \xi - \smallint_0^x \xi f(\xi)\dx \xi - \frac{1}{2}(x^2+1) \smallint_0^1 f(\xi)\dx \xi 
 + \smallint_0^1 \xi f(\xi)\dx\xi
\end{equation*}
of \eqref{eq:StandardEx} for $E_1= \R$ being the constant functions. It is easy to see that in this case we have
$T_1^{-1}(E_1) = \Ll(1, x, x^2)$.

As a second boundary problem, we consider
\begin{equation*} 
 \bvp{u''-u=f}{u'(0)=u'(1)=u(1)=0,}
\end{equation*}
or $(T_2,\B_2)$ with $T_2=\D^2-1$ and $\B_2=\Ll(\E_0\D, \E_1\D, \E_1)$.
For the corresponding generalized Green's operator $G_2$  with exceptional space $E_2= \Ll(x)$, we will now check 
if  the products $G_1G_2$ and $G_2G_1$ are again generalized Green's operators of $T_1T_2=T_2T_1=\D^4-\D^2$, 
using condition~\ref{algOI2} of
Corollary~\ref{cor:OIF}.

We use the algorithm from~\cite{KorporalRegensburgerRosenkranz2011}, implemented in the package \pack\ for 
the computer algebra system \maple, 
to compute the compatibility conditions.  The algorithm is based on the identity
\[
T(\Bo)^\perp=G^*(\B \cap (\Ker T)^\perp),
\]
for any right inverse $G$ of $T$, which follows from Propositions~\ref{prop:Duals} and~\ref{prop:KerComp}. Moreover, a right inverse of the differential operator can be computed by variation of constants and the intersection $\B \cap (\Ker T)^\perp$ using Lemma~\ref{lem:intersections}. Thus we obtain 
 $\C_2 = \Ll (\smallint_0^1 (\exp(-x) + \exp(x)) )$, where $\smallint_0^1 (\exp(-x) + \exp(x))$ denotes  
 the functional $f\mapsto \smallint_0^1 (\exp(-\xi) +\exp(\xi)) f(\xi) \dx \xi$.

The space $T_2^{-1}(E_2)= \Ll(x, \exp(x), \exp(-x))$
can be computed using Proposition~\ref{prop:KerComp} and a right inverse of the differential operator;  
this is also implemented in the \pack\ package.
Hence we have $E_1 \cap T_2^{-1}(E_2) = \{0\}$ and therefore
$\B_2 \cap(E_1 \cap T_2^{-1}(E_2))^{\perp} = \B_2$. Computing $\B_2 \cap E_1^{\perp}$ with Lemma~\ref{lem:intersections}
yields $\B_2 \cap E_1^{\perp} = \Ll(\E_0\D, \E_1\D)$; thus $G_1G_2$ is not an outer inverse of the product $T_2T_1=\D^4-\D^2$ 
by Corollary~\ref{cor:OIF}~\ref{algOI2}.

On the other hand, we have $E_2 \cap T_1^{-1}(E_1) = \Ll(x) = E_2$, hence we know by Corollary~\ref{cor:OIF}~\ref{algOI2} that 
$G_2G_1$ is an outer inverse of $T_1T_2 = \D^4-\D^2$.
Furthermore, by Theorem~\ref{thm:RevOrderLaw} we can determine which boundary problem it solves without 
computing  $G_1$ and $G_2$. With Lemma~\ref{lem:intersections} we obtain $\Bo_1 \cap E_2=\{0\}$ and 
 $\B_1 \cap E_2^{\perp}= \Ll(\E_0\D- \E_1, \E_1\D-\E_1)$.
Since applying the transpose $T_2^*$ to $\E_0\D- \E_1$ and $ \E_1\D-\E_1$ corresponds to multiplying $T_2=\D^2-1$ from the right, 
$G_2G_1$ is the generalized Green's operator of
\begin{equation*}
 (\D^4-\D^2, \Ll ( \E_0\D, \E_1\D, \E_1, \E_0\D^3 - \E_1\D^2, \E_1\D^3 - \E_1\D^2  ),  \R )
\end{equation*}
by \eqref{eq:DefComp}; or, in traditional notation, it solves the boundary problem
\begin{equation*} 
 \bvp{u''''-u''=f}{u'(0)=u'(1)=u(1)=u'''(0)-u''(1)=u'''(1)-u''(1)=0,}
\end{equation*}
with exceptional space $\R$.
\section*{Acknowledgements}

We would like to thank an anonymous referee for his detailed comments. G.R.~was supported by the Austrian Science Fund (FWF): J 3030-N18.

\appendix

\section{Duality} \label{sec:Duality}
In the appendix, we summarize duality results for arbitrary vector spaces and their duals that
generalize the standard duality for finite-dimensional vector spaces but do not require any topological assumptions;
see \cite[Section 9.2 and 9.3]{Kothe1969} and \cite{RegensburgerRosenkranz2009} for further 
details. 
The notation should also remind of the analogous and well-known results for Hilbert spaces.

Let $V$ and $W$ be vector spaces over a field $F$ and $\scal{}{}\colon V \times W
\rightarrow F$ be a bilinear map. 
For $V_1 \leq V$, we define the orthogonal
\begin{equation*}
 V_1^{\perp} = \{w \in W \mid \scal{v}{w} = 0 \text{ for all } v \in V_1\} \leq W.
\end{equation*}
The orthogonal $W_1^{\perp}$ for $W_1 \leq W$ is defined analogously.
A subspace $U$ is called orthogonally closed if $U = U^{\perp\perp}$.
 It follows directly from the definition that for all subsets $X_1, X_2 \subseteq V$, we have
 $X_1 \subseteq X_2 \Rightarrow X_1^{\perp} \supseteq  X_2^{\perp}$ and  $X_1 \subseteq X_1^{\perp\perp}$;
and the same holds for subsets of $W$. Let $\PG(V)$ denote the projective geometry of $V$, that is, the 
 partially ordered set (poset) of all subspaces ordered by inclusion. 
 Then we have an order-reversing Galois connection
between $\PG(V)$ and $\PG(W)$ defined by $U \mapsto U^{\perp}$.
 
We now consider the canonical bilinear form $V \times V^* \to F$ of a vector space $V$ and its dual $V^*$
defined by $\scal{v}{\beta} \mapsto \beta(v)$. Then every subspace $V_1 \leq V$ is orthogonally closed 
with respect to the canonical bilinear form, 
and every finite-dimensional subspace $\B \leq V^*$ is orthogonally
closed. 
The Galois connection
gives an order-reversing bijection between $\PG(V)$ and
the poset of all orthogonally closed subspaces of $V^*$. So we can describe any
subspace $V_1\leq V$ implicitly by the corresponding orthogonally closed
subspace $V_1^{\perp}$. 
We denote the poset of all orthogonally closed subspaces of $V^*$ with $\overline{\PG}(V^*)$.

The projective geometry $\PG(V)$ is a modular lattice, where join and meet are defined as the sum and intersection of
subspaces. Modularity means that for all $V_1, V_2, V_3 \in \PG(V)$ with $V_1 \leq V_3$ we have
\begin{equation} \label{eq:Modularity}
 V_1 + (V_2 \cap V_3) = (V_1 + V_2) \cap V_3.
\end{equation}
Moreover, for spaces $V_1 \leq V_3$ and $V_2 \leq V_4$, we have
\begin{equation} \label{eq:ModImplication}
 V = V_1 + V_2 = V_3 \oplus V_4 \; \Rightarrow \; V_1 = V_3\; \text{and}\; V_2=V_4,
\end{equation}
since $V_3 \cap V_4 = \{0\}$ implies $V_3 = (V_1 \oplus V_2) \cap V_3  = V_1$ and 
$V_4 = (V_1 \oplus V_2) \cap V_4 = V_2$.

One can also show that  $\overline{\PG}(V^*)$ is a modular lattice, where the meet is the intersection and the join is the orthogonal closure
of the sum of subspaces. Using this fact, one
can prove in particular that the sum of two orthogonally closed subspaces is orthogonally closed.  
The following theorem summarizes Section 9.3 of~\cite{Kothe1969}.
\begin{proposition} \label{prop:SumCap}
 The map $V_1 \mapsto V_1^{\perp}$ gives an order-reversing lattice isomorphism with inverse $\B_1 \mapsto \Bo_1$ between the
 complemented modular lattices $\PG(V)$ and $\overline{\PG}(V^*)$. In particular, 
the intersection of orthogonally closed subspaces in $V^*$ is orthogonally closed and 
   \begin{equation*}
     (V_1 + V_2)^{\perp} = V_1^{\perp} \cap V_2^{\perp} \quad \text{and} \quad 
     (\B_1 \cap \B_2)^{\perp} = \B_1^{\perp} + \B_2^{\perp}.
   \end{equation*}
   The sum of two orthogonally closed subspaces in $V^*$ is orthogonally closed and
   \begin{equation*}
    (V_1 \cap V_2)^{\perp} = V_1^{\perp} + V_2^{\perp} \quad \text{and} \quad 
    (\B_1 + \B_2)^{\perp} = \B_1^{\perp} \cap \B_2^{\perp}.
   \end{equation*}
  Furthermore, orthogonality preserves direct sums, such that
\begin{equation*} 
 V = V_1 \oplus V_2 \; \Rightarrow \; V^* = V_1^{\perp} \oplus V_2^{\perp}\quad\text{and}\quad V^* = \B_1 \oplus \B_2  \; \Rightarrow \; V = \B_1^{\perp} \oplus \B_2^{\perp}.
\end{equation*}
\end{proposition}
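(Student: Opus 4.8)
The plan is to view $V_1 \mapsto V_1^{\perp}$ as one half of a Galois connection and to reduce everything to two ``closure facts'': that the intersection, and the sum, of two orthogonally closed subspaces of $V^*$ is again orthogonally closed. Once these are in hand, the lattice isomorphism and all the displayed identities are formal consequences.

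Concretely, I would first record the bijection. Since the Galois connection $U \mapsto U^{\perp}$ is order-reversing and every subspace of $V$ satisfies $V_1^{\perp\perp} = V_1$ (Appendix~\ref{sec:Duality}), the map $\perp$ is injective on $\PG(V)$ with left inverse $\B \mapsto \B^{\perp}$, and its image is exactly $\overline{\PG}(V^*)$: for orthogonally closed $\B$ we have $\B = \B^{\perp\perp} = (\B^{\perp})^{\perp}$ with $\B^{\perp} \in \PG(V)$. Thus $V_1 \mapsto V_1^{\perp}$ and $\B_1 \mapsto \B_1^{\perp}$ are mutually inverse order-reversing bijections between $\PG(V)$ and $\overline{\PG}(V^*)$, and an order-reversing bijection of posets whose inverse is also order-reversing automatically sends existing meets to joins and joins to meets. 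So it remains to identify the meet and join in $\overline{\PG}(V^*)$ — that is, to establish the two closure facts — and to read off the four formulas.

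The identities $(V_1 + V_2)^{\perp} = V_1^{\perp} \cap V_2^{\perp}$ and $(\B_1 + \B_2)^{\perp} = \B_1^{\perp} \cap \B_2^{\perp}$ are immediate from bilinearity (a functional annihilates a sum iff it annihilates each summand), and they already show that intersections of orthogonally closed subspaces are orthogonally closed. The substantive point is $(V_1 \cap V_2)^{\perp} = V_1^{\perp} + V_2^{\perp}$: the inclusion $\supseteq$ is monotonicity, and for $\subseteq$ I would argue directly, since abstract Galois-connection machinery only yields that $(V_1 \cap V_2)^{\perp}$ equals the orthogonal closure of $V_1^{\perp} + V_2^{\perp}$. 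Given $\beta$ vanishing on $V_1 \cap V_2$, restrict $\beta$ to $V_1 + V_2$ and pass to the quotient by $V_1 \cap V_2$, which is the internal direct sum of the images of $V_1$ and of $V_2$; split the induced functional into the part supported on the image of $V_2$ (hence vanishing on $V_1$) plus the part supported on the image of $V_1$ (hence vanishing on $V_2$); pull both pieces back to $V_1 + V_2$ and extend each by zero on a fixed complement of $V_1 + V_2$ in $V$. This exhibits $\beta = \beta_1 + \beta_2$ with $\beta_i \in V_i^{\perp}$. In particular, writing $\B_i = V_i^{\perp}$, this shows the sum of two orthogonally closed subspaces is orthogonally closed ($\B_1 + \B_2 = (V_1 \cap V_2)^{\perp}$); and applying $\perp$ to $(V_1 + V_2)^{\perp} = V_1^{\perp} \cap V_2^{\perp}$ with $V_i = \B_i^{\perp}$, using that subspaces of $V$ are orthogonally closed, gives the last formula $(\B_1 \cap \B_2)^{\perp} = \B_1^{\perp} + \B_2^{\perp}$.

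The rest is bookkeeping. If $V = V_1 \oplus V_2$ then $V_1^{\perp} \cap V_2^{\perp} = (V_1 + V_2)^{\perp} = \{0\}$ and $V_1^{\perp} + V_2^{\perp} = (V_1 \cap V_2)^{\perp} = V^*$, so $V^* = V_1^{\perp} \oplus V_2^{\perp}$; the dual implication is symmetric. This also shows $\overline{\PG}(V^*)$ is complemented (the orthogonal of a complement of $V_1$ is a complement of $V_1^{\perp}$), and modularity of $\overline{\PG}(V^*)$ is transported from that of $\PG(V)$ across the anti-isomorphism, modularity being a self-dual lattice property. The main obstacle is the single non-formal step, the reverse inclusion $(V_1 \cap V_2)^{\perp} \subseteq V_1^{\perp} + V_2^{\perp}$, equivalently the orthogonal-closedness of $V_1^{\perp} + V_2^{\perp}$; it is exactly the content of Section~9.3 of~\cite{Kothe1969}, which one may cite instead, but the quotient-and-extend argument above makes it self-contained over an arbitrary field.
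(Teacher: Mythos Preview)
The paper does not actually prove this proposition: it is stated as a summary of Section~9.3 of K\"othe and left without argument. Your proposal therefore goes well beyond what the paper does, supplying a self-contained proof where the paper only gives a citation. The overall strategy you describe---establish the Galois bijection, get the ``easy'' identities from bilinearity, reduce everything to the single substantive fact that $V_1^{\perp} + V_2^{\perp}$ is orthogonally closed, and then transport complementedness and modularity across the anti-isomorphism---is correct and is indeed how one unpacks K\"othe's Section~9.3.

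There is one small slip in your direct argument for $(V_1 \cap V_2)^{\perp} \subseteq V_1^{\perp} + V_2^{\perp}$. After splitting the induced functional on $(V_1+V_2)/(V_1\cap V_2)$ and pulling back to $V_1+V_2$, you say to extend \emph{each} piece by zero on a fixed complement $W$ of $V_1+V_2$. That gives $\beta_1+\beta_2$ vanishing on $W$, whereas $\beta$ need not; so in general $\beta \neq \beta_1+\beta_2$. The fix is trivial: extend $\beta_1$ by zero on $W$ and extend $\beta_2$ by $\beta|_W$ (this does not disturb $\beta_2 \in V_2^{\perp}$ since $V_2 \subseteq V_1+V_2$), or equivalently first peel off the part of $\beta$ supported on $W$, which already lies in $(V_1+V_2)^{\perp} \subseteq V_1^{\perp}$, and apply your quotient argument to the remainder. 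With this adjustment the proof is complete.
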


For a linear map $A \colon V \to W$ between vector spaces, the \emph{transpose} $A^* \colon  W^* \to V^*$ is defined by 
$\gamma \mapsto \gamma \circ A$. The transposition map $A \mapsto A^*$ from $L(V, W)$ to $L(W^*,V^*)$ is linear, and it is injective since
for all $ w \not= 0$ there exists a linear map $h \in W^*$ with $h(w) \not = 0$. 
Moreover, the transpose of a composition is given by $(A_1A_2)^* = A_2^*A_1^*$. 

The image of an orthogonally closed space under the transpose map is orthogonally closed,
and we have following identities, see, for example, \cite[Prop.~A.6]{RegensburgerRosenkranz2009}.

\begin{proposition}  \label{prop:Duals}
 Let $V$ and $W$ be vector spaces and $A \colon V \to W$ be linear. Then
 \begin{align*}
  A(V_1)^{\perp} &= (A^*)^{-1}(V_1^{\perp}), \quad  &A(\Boi{1}) &= (A^*)^{-1}(\B_1)^{\perp}, \\
  A^*(\C_1)^{\perp} &= A^{-1}(\C_1^{\perp}), \quad &A^*(W_1^{\perp}) &= A^{-1}(W_1)^{\perp},
 \end{align*}
 for subspaces $V_1 \leq V$, $W_1 \leq W$, $\C_1 \leq W^*$ and orthogonally closed subspaces $\B_1 \leq V^*$. In particular,
\begin{equation*}
  (\img A) ^{\perp} = \Ker A^*, \quad \img A =(\Ker A^*)^{\perp}, \quad
  (\img A^*)^{\perp} = \Ker A, \quad \img A^* = (\Ker A)^{\perp}, 
 \end{equation*} 
 for the image and kernel of $A$ and $A^*$.
\end{proposition}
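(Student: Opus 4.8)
The plan is to obtain the four displayed identities by unwinding the definition of the canonical bilinear form $\scal{v}{\beta}=\beta(v)$, taking orthogonals where needed, and then to read off the ``in particular'' statements as the special cases $V_1=V$, $\C_1=W^*$ (together with $W_1=\{0\}$, $\B_1=\{0\}$).

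First I would prove the two identities in which a preimage appears on the right, namely $A(V_1)^{\perp}=(A^*)^{-1}(V_1^{\perp})$ and $A^*(\C_1)^{\perp}=A^{-1}(\C_1^{\perp})$, by a direct element chase. A functional $\gamma\in W^*$ lies in $A(V_1)^{\perp}$ exactly when $\gamma(Av)=0$ for every $v\in V_1$; by definition of the transpose this reads $(A^*\gamma)(v)=0$ for every $v\in V_1$, i.e.\ $A^*\gamma\in V_1^{\perp}$, which is precisely $\gamma\in(A^*)^{-1}(V_1^{\perp})$. The third identity is the symmetric statement: $v\in(A^*(\C_1))^{\perp}$ iff $(A^*\gamma)(v)=0$ for all $\gamma\in\C_1$ iff $\gamma(Av)=0$ for all $\gamma\in\C_1$ iff $Av\in\C_1^{\perp}$ iff $v\in A^{-1}(\C_1^{\perp})$, using only that every element of $A^*(\C_1)$ has the form $A^*\gamma$ with $\gamma\in\C_1$.

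Next I would derive the remaining two identities by applying $(\cdot)^{\perp}$ to these. Taking $V_1=\Boi{1}$ in the first identity gives $A(\Boi{1})^{\perp}=(A^*)^{-1}((\Boi{1})^{\perp})=(A^*)^{-1}(\B_1)$, since $\B_1$ is orthogonally closed; applying $(\cdot)^{\perp}$ once more and using that every subspace of $W$ is orthogonally closed for the canonical form yields $A(\Boi{1})=A(\Boi{1})^{\perp\perp}=((A^*)^{-1}(\B_1))^{\perp}$. Likewise, taking $\C_1=W_1^{\perp}$ in the third identity gives $(A^*(W_1^{\perp}))^{\perp}=A^{-1}(W_1)$ (using that $W_1$ is orthogonally closed), and a further $(\cdot)^{\perp}$ gives $A^*(W_1^{\perp})=(A^{-1}(W_1))^{\perp}$ once we know $A^*(W_1^{\perp})$ is orthogonally closed. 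The ``in particular'' clause then follows by specialization: $V_1=V$ gives $(\img A)^{\perp}=A(V)^{\perp}=(A^*)^{-1}(V^{\perp})=(A^*)^{-1}(\{0\})=\Ker A^*$, and dually $\C_1=W^*$ gives $(\img A^*)^{\perp}=(A^*(W^*))^{\perp}=A^{-1}((W^*)^{\perp})=A^{-1}(\{0\})=\Ker A$; applying $(\cdot)^{\perp}$ and using that $\img A$ and $\img A^*$ are orthogonally closed gives $\img A=(\Ker A^*)^{\perp}$ and $\img A^*=(\Ker A)^{\perp}$.

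The only step that is not pure formal manipulation is the orthogonal closedness of images under the transpose — needed for $A^*(W_1^{\perp})$ and for $\img A^*=A^*(W^*)$ — which I would take from the fact recalled just before the proposition. If one prefers a self-contained argument, one can instead verify the inclusion $(A^{-1}(W_1))^{\perp}\subseteq A^*(W_1^{\perp})$ directly: a functional $\beta$ vanishing on $A^{-1}(W_1)$ in particular vanishes on $\Ker A$, hence factors as $\beta=\delta\circ A$ for some $\delta$ defined on $\img A$; since $\delta$ then vanishes on $\img A\cap W_1=A(A^{-1}(W_1))$, it extends (choosing a complement of $\img A\cap W_1$ in $W_1$ and of $\img A+W_1$ in $W$) to a functional $\gamma\in W^*$ vanishing on $W_1$, so $\beta=A^*\gamma\in A^*(W_1^{\perp})$. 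This extension-of-functionals argument is the part I expect to require the most care; everything else is bookkeeping about which of the pairings $V\times V^*$ or $W\times W^*$ each orthogonal refers to.
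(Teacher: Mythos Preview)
Your argument is correct. The two ``element-chase'' identities are immediate from the definition of $A^*$, and deducing the other two by substituting $V_1=\Boi{1}$ and $\C_1=W_1^{\perp}$ and taking a further orthogonal is exactly the right move; you correctly isolate the one nontrivial point, namely that $A^*(W_1^{\perp})$ (and in particular $\img A^*$) is orthogonally closed, and your direct extension-of-functionals argument for $(A^{-1}(W_1))^{\perp}\subseteq A^*(W_1^{\perp})$ is sound.

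As for comparison with the paper: the paper does not supply a proof of this proposition at all. It states the result in the appendix and refers the reader to \cite[Prop.~A.6]{RegensburgerRosenkranz2009}, merely recording beforehand the key fact that the transpose sends orthogonally closed subspaces to orthogonally closed subspaces. Your write-up therefore goes beyond what the paper does, and your optional self-contained justification of that closedness fact makes the argument independent of the cited reference.
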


The property of being a projector, outer/inner/\agi\  carries over to
the transpose.

\begin{proposition} \label{prop:PP*proj}
 A linear map $P \colon V \to V$ is a projector if and only if its transpose $P^*$ is a projector. 
 A linear map $G \colon W \to V$ is an outer/inner/\agi\ of $T \colon V \to W$ if and only if
 $G^*$ is an outer/inner/\agi\ of $T^*$.
\end{proposition}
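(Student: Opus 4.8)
The plan is to reduce both assertions to the two elementary facts about transposition recalled just above: the contravariant rule $(A_1A_2)^* = A_2^*A_1^*$ and the injectivity of the map $A \mapsto A^*$. The observation driving the argument is that idempotency $P^2 = P$, the outer inverse relation $GTG = G$, and the inner inverse relation $TGT = T$ are all operator identities that are \emph{palindromic} in their factors, so applying $(\cdot)^*$ reverses the order of the factors but returns an identity of exactly the same shape for the transposes.

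Concretely, for the first statement I would compute $(P^2)^* = (PP)^* = P^*P^* = (P^*)^2$. Thus if $P$ is a projector then $(P^*)^2 = (P^2)^* = P^*$, so $P^*$ is a projector; conversely, if $(P^*)^2 = P^*$ then $(P^2)^* = P^*$, and injectivity of transposition yields $P^2 = P$. For the second statement I would argue in the same way, using $(GTG)^* = G^*T^*G^*$ and $(TGT)^* = T^*G^*T^*$: transposing shows $GTG = G \Rightarrow G^*T^*G^* = G^*$ and $TGT = T \Rightarrow T^*G^*T^* = T^*$, while injectivity of $A \mapsto A^*$ gives the reverse implications. The case of \agi s is then just the conjunction of the outer and inner inverse cases.

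I do not anticipate any genuine difficulty; the only point that needs to be stated carefully is that the ``if'' directions really do require injectivity of the transposition map (one cannot simply transpose a second time without invoking a reflexivity property of $V^{**}$), and this is precisely the injectivity recorded before Proposition~\ref{prop:Duals}.
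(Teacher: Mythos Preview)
Your proposal is correct and follows essentially the same argument as the paper: the paper also appeals directly to the defining equations, applies the contravariant rule to get, e.g., $G^*T^*G^* = (GTG)^* = G^*$, and invokes injectivity of the transposition map for the reverse implication. Your write-up is simply more explicit (spelling out the projector case and the inner/\agi\ cases separately), but the approach is identical.
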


\begin{proof} This follows from the defining equations for these properties.
 For example, if $G$ is an outer inverse of $T$, we have 
$G^*T^*G^* = (GTG)^* = G^*$, and
 the reverse implication follows from the injectivity of the transposition map.
\end{proof}

With the results of this section, we obtain the following duality principle 
for generalized inverses.

\begin{remark} \label{DualPrin}
Given a valid statement for linear maps on arbitrary vector spaces $V$ 
over a common field involving inclusions, $\{0\}$ and $V$, sums and intersections, direct sums, kernels and images,
projectors, and outer/inner/\agi s,
we obtain a valid dual statement by
\begin{itemize}
 \item reversing the order of the linear maps and the corresponding domains and codomains,
 \item reversing inclusions and interchanging $V$ and $\{0\}$,
 \item interchanging sums and intersections,
 \item interchanging kernels and images.
\end{itemize}
\end{remark}

For example, one easily checks that in Proposition \ref{prop:OIChar}, the statements \ref{oi:Nashed5}--\ref{oi:Nashed7} are 
the duals of \ref{oi:Nashed2}--\ref{oi:Nashed4} in this sense. 


\providecommand{\bysame}{\leavevmode\hbox to3em{\hrulefill}\thinspace}
\providecommand{\MR}{\relax\ifhmode\unskip\space\fi MR }
\providecommand{\MRhref}[2]{%
  \href{http://www.ams.org/mathscinet-getitem?mr=#1}{#2}
}
\providecommand{\href}[2]{#2}

\end{document}